\newtheorem{theo}{Theorem}
\newtheorem{prop}[theo]{Proposition}
\newtheorem{coro}[theo]{Corollary}
\newtheorem{lemm}[theo]{Lemma}
\theoremstyle{remark}
  \newtheorem*{rema*}{Remark}
  \newtheorem{rema}[theo]{Remark}
  \newtheorem*{defi*}{Definition}  
\newcommand{\comment}[1]{}
\newcommand \al{\alpha}
\newcommand\be{\beta}
\newcommand\et{\eta}
\newcommand\si{\sigma}
\newcommand\ph{\varphi}
\newcommand\ps{\psi}
\newcommand\om{\omega}
\newcommand\Ga{\Gamma}
\newcommand\Th{\Theta}
\newcommand\La{\Lambda}
\newcommand\Om{\Omega}
\newcommand\ie{i.e.\ }
\def\RR{\mathbb R}
\renewcommand\o{\circ}
\newcommand\x{\times}
\newcommand\on{\operatorname}
\newcommand\pr{\on{pr}}
\newcommand\ev{\on{ev}}
\newcommand\dd{\mathbf{d}}
\newcommand\Diff{\on{Diff}}
\newcommand\Emb{\on{Emb}}
\newcommand\Gr{\on{Gr}}
\newcommand\pa{{\on{\partial}}}
\newcommand\g{\mathfrak g}
\newcommand\X{\mathfrak X}
\newcommand\M{{\mathcal M}}
\newcommand\F{\mathcal{F}}
\newcommand{\fint}{-\!\!\!\!\!\!\int}
\def\XXint#1#2#3{{\setbox0=\hbox{$#1{#2#3}{\int}$ }
\vcenter{\hbox{$#2#3$ }}\kern-.5\wd0}}
\date{ }
\begin{document}

\title{Induced differential forms on\\ manifolds of functions}
\author{Cornelia Vizman \\\it \small West University of Timi\c soara,
Department of Mathematics\\ 
%\it\small Bd. V.Parvan 4, 300223--Timi\c soara, Romania\\
\it \small e-mail: vizman@math.uvt.ro}
\maketitle

\begin{abstract}
Differential forms on the Fr\'echet manifold $\F(S,M)$ of smooth functions on a compact $k$--dimensional manifold $S$ can be obtained in a natural way from pairs of differential forms on $M$ and $S$
by the hat pairing.
Special cases are the transgression map $\Om^p(M)\to\Om^{p-k}(\F(S,M))$ (hat pairing with a constant function) and the bar map $\Om^p(M)\to\Om^p(\F(S,M))$ (hat pairing with a volume form). We develop a hat calculus similar to the tilda calculus for non-linear Grassmannians \cite{HV04}. 
\end{abstract}

%%%%%%%%%%%%%%%%%%%%

\section{Introduction}

Pairs of differential forms on the finite dimensional manifolds $M$ and $S$ induce differential forms on the Fr\'echet manifold $\F(S,M)$ of smooth functions. More precisely, if $S$ is a compact oriented $k$--dimensional manifold, the hat pairing is:
\begin{gather*}
\Omega^p(M)\times\Omega^q(S)\to\Om^{p+q-k}(\F(S,M))\\
{\widehat{\om\cdot\al}=\fint_S\ev^*\om\wedge\pr^*\al},
\end{gather*}
where $\ev:S\x\F(S,M)\to M$ denotes the evaluation map, $\pr:S\x\F(S,M)\to S$ the projection and $\fint_S$ fiber integration. 
We show that the hat pairing is compatible with the canonical $\rm{Diff}(M)$ and ${\rm Diff}(S)$ actions on $\F(S,M)$, and with the exterior derivative.
As a consequence we obtain a hat pairing in cohomology.

The hat (transgression) map is the hat pairing with the constant function $1$, so it associates to any form $\om\in\Om^p(M)$ the form $\widehat{\om\cdot 1}=\widehat\om=\fint_S\ev^*\om\in\Om^{p-k}(\F(S,M))$. 
Since $\X(M)$ acts infinitesimally transitive on the open subset $\Emb(S,M)\subset\F(S,M)$ of embeddings of the $k$--dimensional oriented manifold $S$ into $M$ \cite{H76}, the expression of $\widehat\om$ at $f\in\Emb(S,M)$ is
\[
\widehat{\om}(X_1\o f,\dots, X_{p-k}\o f)=\int_Sf^*(i_{X_{p-k}}\dots i_{X_1}\om),\quad X_1,\dots,X_{p-k}\in\X(M).
\] 
When $S$ is the circle, then one obtains the usual transgression map 
with values in the space of $(p-1)$-forms on the free loop space of $M$. 

Let $\Gr_k(M)$ be the non-linear Grassmannian of $k$--dimensional oriented submanifolds of $M$. The tilda map associates to every
$\om\in\Om^p(M)$ a differential $(p-k)$-form on $\Gr_k(M)$
given by \cite{HV04}
\[
\tilde\om(\tilde Y_N^1,\dotsc,\tilde Y_N^{p-k})
=\int_Ni_{Y_N^{p-k}}\cdots i_{Y_N^1}\om,\quad\forall\tilde Y_N^1,\dots,\tilde Y_N^{p-k}\in \Ga(TN^\perp)=T_N\Gr_k(M),
\]
for $\tilde Y_N$
section of the orthogonal bundle $TN^\perp$ represented by the section $Y_N$ of $TM|_N$.
The natural map 
\[
\pi:\Emb(S,M)\to\Gr_k(M),\quad\pi(f)=f(S)
\]
provides a principal bundle with the group $\Diff_+(S)$ of orientation preserving diffeomorphisms of $S$ as structure group. 

The hat map on $\Emb(S,M)$ and the tilda map on $\Gr_k(M)$ 
are related by $\widehat\om=\pi^*\tilde\om$.
This is the reason why for the hat calculus one has similar properties to those for the tilda calculus.
The tilda calculus was used to study the non-linear Grassmannian of co-dimension two submanifolds as symplectic manifold \cite{HV04}.
We apply the hat calculus to
the hamiltonian formalism for $p$-branes and open $p$-branes
\cite{AS05} \cite{BZ05}.

The bar map $\bar\om=\widehat{\om\cdot\mu}$ is the hat pairing with a fixed volume form $\mu$ on $S$, so
\[
\bar\om(Y^1_f,\dots,Y^p_f)=\int_S \om(Y^1_f,\dots,Y^p_f)\mu,
\quad\forall Y^1_f,\dots,Y^{p}_f\in \Ga(f^*TM)=T_f \mathcal{F}(S,M).
\]
We use the bar calculus to study $\F(S,M)$ with symplectic form $\bar\om$ induced by a symplectic form $\om$ on $M$.
The natural actions of 
$\Diff_{ham}(M,\om)$ and $\Diff_{ex}(S,\mu)$, 
the group of hamiltonian diffeomorphisms of $M$ and the group of exact volume preserving diffeomorphisms of $S$, are two commuting hamiltonian actions on $\F(S,M)$. 
Their momentum maps 
form the dual pair for ideal incompressible fluid flow \cite{MW83} \cite{GBV09}.

We are grateful to Stefan Haller for extremely helpful suggestions.

%%%%%%%%%%%%%%%%%%%%%%%%%%

\section{Hat pairing}\label{calc}

We denote by $\mathcal{F}(S,M)$ the set of smooth functions from
a compact oriented $k$--dimensional manifold $S$ to a manifold $M$.
It is a Fr\'echet manifold in a natural way \cite{KM97}.
Tangent vectors at $f\in\mathcal{F}(S,M)$ are identified with vector fields on $M$ along $f$, \ie sections of the pull-back vector bundle $f^*TM$. 

Let $\ev:S\x\F(S,M)\to M$ be the evaluation map $\ev(x,f)=f(x)$
and $\pr:S\x\F(S,M)\to S$ the projection $\pr(x,f)=x$.
A pair of differential forms $\om\in\Om^p(M)$ and $\al\in\Om^q(S)$ determines a differential form $\widehat{\om\cdot\al}$ on $\F(S,M)$ by the fiber integral over $S$ (whose definition and properties are listed in the appendix) of the $(p+q)$-form $\ev^*\om\wedge\pr^*\al$ on $S\x\F(S,M)$:
\begin{equation}\label{use}
{\widehat{\om\cdot\al}=\fint_S\ev^*\om\wedge\pr^*\al}
\end{equation}
In this way we obtain a bilinear map called the {\it hat pairing}: 
\begin{equation*}\label{pair}
\Om^p(M)\x\Om^q(S)\to\Om^{p+q-k}(\F(S,M)).
\end{equation*}

An explicit expression of the hat pairing avoiding fiber integration is:
\begin{align}\label{ffff}
(\widehat{\om\cdot\al})_f(Y_f^1,\dots,Y_f^{p+q-k})
=\int_Sf^*(i_{Y_f^{p+q-k}}\dots i_{Y_f^1}(\om\o f))\wedge\al,
\end{align}
for $Y_f^1,\dots Y_f^{p+q-k}$ vector fields on $M$ along $f\in\F(S,M)$.
Here we denote by $f^*\be_f$ the "restricted pull-back" by $f$ of a section $\be_f$ of $f^*(\La^m T^*M)$, which is a differential $m$--form on $S$ given by
$f^*\be_f:x\in S\mapsto(\La^mT^*_xf)(\be_f(x))\in\La^mT_x^*S$,
where $T_x^*f:T^*_{f(x)}M\to T^*_xS$ denotes the dual of $T_xf$.

The fact that \eqref{use} and \eqref{ffff} provide the same differential form on $\F(S,M)$ can be deduced from the identity
\[
(\ev^*\om)_{(x,f)}(Y_f^1,\dots, Y_f^{p-k},X_x^1,\dots, X_x^k)
=f^*(i_{Y_f^{p-k}}\dots i_{Y_f^1}(\om\o f))(X_x^1,\dots, X_x^k)
\]
for $Y_f^1,\dots,Y_f^{p-k}\in T_f\F(S,M)$ and $X_x^1,\dots,X_x^k\in T_xS$. 

Since $\X(M)$ acts infinitesimally transitive on the open subset $\Emb(S,M)\subset\F(S,M)$ of embeddings of the $k$--dimensional oriented manifold $S$ into $M$, we express $\widehat\om$ at $f\in\Emb(S,M)$ as:
\begin{equation}\label{embf}
(\widehat{\om\cdot\al})_f(X_1\o f,\dots, X_{p+q-k}\o f)=\int_Sf^*(i_{X_{p+q-k}}\dots i_{X_1}\om)\wedge\al.
\end{equation}
One uses the fact that the "restricted pull-back" by $f$
of $i_{X_{p+q-k}\o f}\dots i_{X_1\o f}(\om\o f)$ is 
$f^*(i_{X_{p+q-k}}\dots i_{X_1}\om)$.

Next we show that the hat pairing is compatible with the exterior derivative of differential forms.

\begin{theo}\label{dddd}
The exterior derivative $\dd$ 
is a derivation for the hat pairing, \ie
\begin{equation}\label{deri}
\dd(\widehat{\om\cdot\al})= \widehat{(\dd\om)\cdot\al}+(-1)^{p}\widehat{\om\cdot\dd\al},
\end{equation}
where $\om\in\Om^p(M)$ and $\al\in\Om^q(S)$.
\end{theo}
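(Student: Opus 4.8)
The plan is to reduce the identity \eqref{deri} to the already-known behavior of the exterior derivative under pullback and fiber integration. Starting from the definition $\widehat{\om\cdot\al}=\fint_S\ev^*\om\wedge\pr^*\al$, I would apply $\dd$ and use that pullback commutes with $\dd$ to get $\dd(\ev^*\om\wedge\pr^*\al)=\ev^*(\dd\om)\wedge\pr^*\al+(-1)^p\ev^*\om\wedge\pr^*(\dd\al)$ on $S\x\F(S,M)$. The only genuinely nontrivial input is the commutation rule between $\dd$ on $\F(S,M)$ and the fiber integral $\fint_S$ over the \emph{closed} manifold $S$: since $S$ is compact without boundary, Stokes' theorem gives $\dd\circ\fint_S=\fint_S\circ\,\dd$ with no boundary term. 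This is presumably one of the properties listed in the appendix, so I would simply cite it.

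Concretely, the key steps in order are: (1) write $\dd(\widehat{\om\cdot\al})=\dd\fint_S(\ev^*\om\wedge\pr^*\al)=\fint_S\dd(\ev^*\om\wedge\pr^*\al)$, invoking the appendix's $\dd$--$\fint_S$ commutation (valid because $\pa S=\emptyset$); (2) expand $\dd(\ev^*\om\wedge\pr^*\al)$ by the Leibniz rule, noting $|\ev^*\om|=p$; (3) use $\dd\ev^*\om=\ev^*\dd\om$ and $\dd\pr^*\al=\pr^*\dd\al$; (4) split the fiber integral over the sum and recognize the two terms as $\fint_S\ev^*(\dd\om)\wedge\pr^*\al=\widehat{(\dd\om)\cdot\al}$ and $(-1)^p\fint_S\ev^*\om\wedge\pr^*(\dd\al)=(-1)^p\widehat{\om\cdot\dd\al}$, which is exactly \eqref{deri}.

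The main obstacle — such as it is — is bookkeeping rather than substance: one must be careful that the sign $(-1)^p$ comes from the degree of $\ev^*\om$, which equals $\deg\om=p$ and not $p+q-k$, and that no extra sign enters from the fiber integration itself (fiber integration over $S$ lowers degree by $k$ but the interchange $\dd\circ\fint_S=\fint_S\circ\,\dd$ carries no sign, again because there is no boundary contribution). I would also remark that since $\dd$ is a derivation for the hat pairing and the pairing descends to cohomology classes of $M$ and $S$, one immediately obtains the induced hat pairing in cohomology $H^p(M)\x H^q(S)\to H^{p+q-k}(\F(S,M))$ mentioned in the introduction.
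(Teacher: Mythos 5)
Your proposal is correct and follows exactly the paper's own argument: commute $\dd$ with $\fint_S$ (no boundary term since $\pa S=\emptyset$), apply the Leibniz rule to $\ev^*\om\wedge\pr^*\al$ using $|\ev^*\om|=p$, and identify the two resulting fiber integrals with $\widehat{(\dd\om)\cdot\al}$ and $(-1)^p\widehat{\om\cdot\dd\al}$. The sign bookkeeping and the remark on the induced pairing in cohomology also match the paper.
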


\begin{proof}
Differentiation and fiber integration along the boundary free manifold $S$ commute, so
\begin{align*}
\dd(\widehat{\om\cdot\al})&= 
\dd\fint_S\ev^*\om\wedge\pr^*\al=\fint_S\dd(\ev^*\om\wedge\pr^*\al)\\
&=\fint_S\ev^*\dd\om\wedge\pr^*\al+(-1)^p\fint_S\ev^*\om\wedge\pr^*\dd\al
=\widehat{(\dd\om)\cdot\al}+(-1)^{p}\widehat{\om\cdot\dd\al}
\end{align*}
for all $\om\in\Om^p(M)$ and $\al\in\Om^q(S)$.
\end{proof}

The differential form $\widehat{\om\cdot\al}$ is exact if
$\om$ is closed and $\al$ exact
(or if $\al$ is closed and $\om$ exact).
In the special case $p+q=k$ these conditions imply that the function $\widehat{\om\cdot\al}$ on $\F(S,M)$ vanishes. 

\begin{coro}
The hat pairing induces a bilinear map on de Rham cohomology spaces
\begin{equation}\label{hh}
H^p(M)\x H^q(S)\to H^{p+q-k}(\F(S,M)).
\end{equation}
In particular there is a bilinear map
$$H^p(M)\x H^q(M)\to H^{p+q-k}(\Diff(M)).$$ 
\end{coro}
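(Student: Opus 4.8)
The plan is to deduce the Corollary directly from Theorem~\ref{dddd} together with the remark preceding it. First I would observe that equation~\eqref{deri} shows the hat pairing is a chain map on the level of forms: if $\om\in\Om^p(M)$ and $\al\in\Om^q(S)$ are both closed, then $\dd(\widehat{\om\cdot\al})=\widehat{(\dd\om)\cdot\al}+(-1)^p\widehat{\om\cdot\dd\al}=0$, so $\widehat{\om\cdot\al}$ is a closed form on $\F(S,M)$, hence defines a class in $H^{p+q-k}(\F(S,M))$. Next I would check that this class depends only on the cohomology classes of $\om$ and $\al$: replacing $\om$ by $\om+\dd\eta$ changes $\widehat{\om\cdot\al}$ by $\widehat{(\dd\eta)\cdot\al}$, which by \eqref{deri} (using $\dd\al=0$) equals $\dd(\widehat{\eta\cdot\al})$ up to sign, hence is exact; symmetrically for a change in $\al$. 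This gives the well-defined bilinear map \eqref{hh}.

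For the second assertion I would specialize $S=M$ and then restrict along the natural inclusion $\Diff(M)\hookrightarrow\F(M,M)$. One subtlety is that $\Diff(M)$ is only open in $\F(M,M)$ when the source and target manifold is closed (or one works with the group of diffeomorphisms as a Fréchet manifold in its own right), so I would take $S=M$ compact oriented $k$-dimensional with $k=\dim M$, so that $\Diff(M)=\Diff_+(M)$ is an open submanifold of $\F(M,M)$, and restriction of differential forms is a cochain map $\Om^\bullet(\F(M,M))\to\Om^\bullet(\Diff(M))$ inducing $H^\bullet(\F(M,M))\to H^\bullet(\Diff(M))$. Composing with \eqref{hh} for $S=M$ yields the claimed bilinear map $H^p(M)\x H^q(M)\to H^{p+q-k}(\Diff(M))$, and since $p+q-k=p+q-\dim M$ this is the stated degree.

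The only real work is bookkeeping: verifying that the correction terms are genuinely exact forms on the Fréchet manifold $\F(S,M)$ (which is immediate from Theorem~\ref{dddd} once one knows $\dd$ behaves on $\F(S,M)$ as in finite dimensions, and this is part of the standard convenient-calculus setup referenced earlier), and that the restriction map to an open submanifold commutes with $\dd$. I do not expect any genuine obstacle; the content is entirely contained in the derivation property \eqref{deri}. If one wished to state the result without a dimension restriction on the target, the mild technical point to address is simply that $\Diff(M)$ carries its own Fréchet manifold structure and the evaluation-based formulas \eqref{ffff} and \eqref{embf} still make sense there, so the pullback of $\widehat{\om\cdot\al}$ along the inclusion is a well-defined closed form.
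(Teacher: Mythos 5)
Your argument is correct and is essentially the paper's (implicit) proof: the corollary is a direct consequence of the derivation property \eqref{deri}, which shows that the hat pairing of closed forms is closed and that changing either form by an exact one changes the result by an exact form, and the second assertion follows by taking $S=M$ and restricting to the open subset $\Diff(M)\subset\F(M,M)$. The only quibble is your aside ``$\Diff(M)=\Diff_+(M)$'', which is false in general; what matters is only that $\Diff(M)$ is open in $\F(M,M)$ for $M$ compact, so restriction of forms commutes with $\dd$ and induces the required map on cohomology.
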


\begin{rema}
The cohomology group $H^q(S)$ is isomorphic to the  homology group $H_{k-q}(S)$ by Poincar\' e duality.
With the notation $n=k-q$, the hat pairing \eqref{hh} becomes
\[
H^p(M)\x H_n(S)\to H^{p-n}(\F(S,M)),
\]
and it is induced by the map $(\om,\si)\mapsto \fint_\si\ev^*\om$, for differential $p$-forms $\om$ on $M$
and $n$-chains $\si$ on $S$.
\end{rema}

%\medskip

If $S$ is a manifold with boundary, then formula \eqref{deri}
receives an extra term coming from integration over the boundary. Let $i_\pa:\pa S\to S$ be the inclusion and $r_\pa:\F(S,M)\to\F(\pa S,M)$ the restriction map.

\begin{prop}\label{inde}
The identity
\begin{equation}\label{unu}
\dd(\widehat{\om\cdot\al})= \widehat{(\dd\om)\cdot\al}+(-1)^{p}\widehat{\om\cdot\dd\al}
+(-1)^{p+q-k}r_\pa^*(\widehat{\om\cdot i_\pa^*\al}^\pa)
\end{equation}
holds for $\om\in\Om^p(M)$ and $\al\in\Om^q(S)$, where
the upper index $\pa$ assigned to the hat means the pairing 
$$
\Om^p(M)\x\Om^q(\pa S)\to\Om^{p+q-k+1}(\F(\pa S,M)).
$$
\end{prop}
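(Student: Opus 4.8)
The plan is to mimic the proof of Theorem \ref{dddd}, replacing the naive commutation of $\dd$ with $\fint_S$ by the version of Stokes' theorem for fiber integration over a manifold with boundary. Concretely, I would first record from the appendix the identity
\[
\dd\fint_S \eta = \fint_S \dd\eta + (-1)^{\deg\eta - k}\fint_{\pa S}(i_\pa\x\Id)^*\eta,
\]
valid for $\eta\in\Om^*(S\x\F(S,M))$, where $i_\pa\x\Id:\pa S\x\F(S,M)\to S\x\F(S,M)$ is the inclusion. This is exactly the boundary-corrected commutation rule, and it is the only new ingredient beyond what was used for Theorem \ref{dddd}.

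Next I would apply this to $\eta=\ev^*\om\wedge\pr^*\al$, which has degree $p+q$, so the sign in front of the boundary term is $(-1)^{p+q-k}$, matching \eqref{unu}. The interior term $\fint_S\dd(\ev^*\om\wedge\pr^*\al)$ expands by the Leibniz rule and naturality of pullback exactly as in the proof of Theorem \ref{dddd}, producing $\widehat{(\dd\om)\cdot\al}+(-1)^p\widehat{\om\cdot\dd\al}$. So the work is entirely in identifying the boundary term with $r_\pa^*(\widehat{\om\cdot i_\pa^*\al}^\pa)$.

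For that identification, the key observation is the commuting square: $\ev\o(i_\pa\x\Id) = \ev^\pa\o(\Id_{\pa S}\x r_\pa)$ where $\ev^\pa:\pa S\x\F(\pa S,M)\to M$ is the evaluation for $\pa S$, and similarly $\pr\o(i_\pa\x\Id) = (i_\pa\o\pr^\pa)\o(\Id\x r_\pa)$ on the $S$-factor. Hence $(i_\pa\x\Id)^*(\ev^*\om\wedge\pr^*\al) = (\Id\x r_\pa)^*\big((\ev^\pa)^*\om\wedge(\pr^\pa)^*i_\pa^*\al\big)$. Then I would use that fiber integration commutes with the pullback $(\Id_{\pa S}\x r_\pa)$ along the second factor (i.e. $\fint_{\pa S}(\Id\x r_\pa)^* = r_\pa^*\fint_{\pa S}$, since $r_\pa$ is a map of the parameter manifold only) to get $\fint_{\pa S}(i_\pa\x\Id)^*\eta = r_\pa^*\fint_{\pa S}\big((\ev^\pa)^*\om\wedge(\pr^\pa)^*i_\pa^*\al\big) = r_\pa^*\big(\widehat{\om\cdot i_\pa^*\al}^\pa\big)$, which is precisely the claimed term.

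The main obstacle is bookkeeping of signs and orientations in the boundary formula for fiber integration — getting the $(-1)^{p+q-k}$ right and making sure $\pa S$ carries the induced (Stokes) orientation so that $\widehat{\,\cdot\,}^\pa$ is the fiber integral over $\pa S$ with that orientation. I would resolve this by stating the appendix's Stokes formula for $\fint$ carefully once and then only tracking the degree of $\eta$. The naturality square for evaluation and projection maps under boundary restriction is essentially formal, so I do not expect difficulty there beyond writing it down cleanly.
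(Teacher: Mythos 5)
Your proposal is correct and follows essentially the same route as the paper: apply the boundary-corrected Stokes formula \eqref{r4} to $\ev^*\om\wedge\pr^*\al$, expand the interior term as in Theorem \ref{dddd}, and identify the boundary term via the naturality of $\ev$ and $\pr$ under restriction to $\pa S$. The only (harmless) difference is that you make explicit the factorization of the boundary evaluation map through $\F(\pa S,M)$ and the resulting appeal to \eqref{r1} to pull $r_\pa^*$ out of the fiber integral, a step the paper leaves implicit when it rewrites $\fint_{\pa S}\ev_\pa^*\om\wedge\pr_\pa^*i_\pa^*\al$ as $r_\pa^*(\widehat{\om\cdot i_\pa^*\al}^\pa)$.
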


\begin{proof}
For any differential $n$--form $\be$ on $S\x\F(S,M)$,
%with $S$ a $k$-dimensional compact manifold,
the identity
\[
\dd\fint_S\be-\fint_S\dd\be=(-1)^{n-k}\fint_{\pa S}(i_\pa\x 1_{\F(S,M)})^*\be
\]
holds because of the identity \eqref{r4} from the appendix. The obvious formulas
\[
\pr\o(i_\pa\x 1_{\F(S,M)})=i_\pa\o\pr_\pa,\quad \ev\o(i_\pa\x 1_{\F(S,M)})=\ev_\pa,
\]
for $\ev_\pa:\pa S\x\F(S,M)\to M$ and
$\pr_\pa:\pa S\x\F(S,M)\to\pa S$, are used to compute
\begin{align*}
\dd&(\widehat{\om\cdot\al})= 
\dd\fint_S\ev^*\om\wedge\pr^*\al\\
&=\fint_S\dd(\ev^*\om\wedge\pr^*\al)
+(-1)^{p+q-k}\fint_{\pa S}(i_\pa\x 1_{\F(S,M)})^*(\ev^*\om\wedge\pr^*\al)\\
&=\fint_S\ev^*\dd\om\wedge\pr^*\al+(-1)^p\fint_S\ev^*\om\wedge\pr^*\dd\al
+(-1)^{p+q-k}\fint_{\pa S}\ev_\pa^*\om\wedge\pr_\pa^*i_\pa^*\al\\
&=\widehat{(\dd\om)\cdot\al}+(-1)^{p}\widehat{\om\cdot\dd\al}
+(-1)^{p+q-k}{r_\pa}^*(\widehat{\om\cdot i_\pa^*\al}^\pa),
\end{align*}
thus obtaining the requested identity.
\end{proof}

%%%%%%%%%%%%%%%%%%%%%%%%%%%%%%%

%Next we show some functorial properties of the hat pairing.

\paragraph{Left $\Diff(M)$ action.}
The natural left action of the group of diffeomorphisms $\Diff(M)$ on $\F(S,M)$ 
is $\ph\cdot f=\ph\o f$. The infinitesimal action of $X\in\X(M)$ is  the vector field $\bar X$ on $\F(S,M)$: 
\[
\bar X(f)=X\o f,\quad\forall f\in\F(S,M).
\] 
We denote by $\bar\ph$ the diffeomorphism of $\F(S,M)$ induced by 
the action of $\ph\in\Diff(M)$, so $\bar\ph(f)=\ph\o f$ is the push-forward by $\ph$.

\begin{prop}\label{pppp}
Given $\om\in\Om^p(M)$ and $\al\in\Om^q(S)$, the identity
\begin{align}\label{star}
\bar\ph^*\widehat{\om\cdot\al}&=\widehat{(\ph^*\om)\cdot\al}
\end{align}
and its infinitesimal version 
\begin{align}\label{el}
L_{\bar X}\widehat{\om\cdot\al}&=\widehat{(L_X\om)\cdot\al}
\end{align}
hold for all $\ph\in\Diff(M)$ and $X\in\X(M)$.
\end{prop}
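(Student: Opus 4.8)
The plan is to reduce the proposition to a statement about the evaluation and projection maps and then invoke naturality of fiber integration, exactly in the spirit of the proof of Theorem~\ref{dddd}. The key observation is that the diffeomorphism $\bar\ph$ of $\F(S,M)$ interacts with the evaluation map in a clean way: since $\bar\ph(f)=\ph\o f$, we have $\ev\o(1_S\x\bar\ph)=\ph\o\ev$ as maps $S\x\F(S,M)\to M$, while $\pr\o(1_S\x\bar\ph)=\pr$ since $\bar\ph$ does not touch the $S$-factor. I would state these two identities first, as they are the entire geometric content.

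From there the computation is a formal pull-back manipulation:
\begin{align*}
\bar\ph^*\widehat{\om\cdot\al}
&=\bar\ph^*\fint_S\ev^*\om\wedge\pr^*\al
=\fint_S(1_S\x\bar\ph)^*(\ev^*\om\wedge\pr^*\al)\\
&=\fint_S\big((1_S\x\bar\ph)^*\ev^*\om\big)\wedge\big((1_S\x\bar\ph)^*\pr^*\al\big)
=\fint_S\ev^*\ph^*\om\wedge\pr^*\al
=\widehat{(\ph^*\om)\cdot\al}.
\end{align*}
The one step that genuinely needs the appendix is the second equality, pulling the fiber integral back through $\bar\ph$: one needs that $\fint_S$ commutes with pull-back along a bundle map which is the identity on the base $S$ and the diffeomorphism $\bar\ph$ on the fibers; this is the standard naturality property of fiber integration over a fixed compact fiber, and I would cite the relevant formula from the appendix (analogous to \eqref{r4}).

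For the infinitesimal version \eqref{el} I would differentiate \eqref{star} along a flow. Concretely, let $\Fl^X_t$ be the flow of $X\in\X(M)$; then $\overline{\Fl^X_t}$ is the flow of $\bar X$ on $\F(S,M)$ (this follows directly from $\bar X(f)=X\o f$ and the chain rule, and is worth noting explicitly). Applying \eqref{star} with $\ph=\Fl^X_t$ gives $(\overline{\Fl^X_t})^*\widehat{\om\cdot\al}=\widehat{((\Fl^X_t)^*\om)\cdot\al}$; differentiating at $t=0$, the left side yields $L_{\bar X}\widehat{\om\cdot\al}$ by definition of the Lie derivative, and the right side yields $\widehat{(L_X\om)\cdot\al}$ because the hat pairing is linear and continuous in $\om$, so $\frac{d}{dt}\big|_0$ passes inside the hat and inside the fiber integral.

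I expect the main (and only real) obstacle to be the rigorous justification of commuting $\bar\ph^*$ with $\fint_S$ in the Fr\'echet-manifold setting, and likewise commuting $\frac{d}{dt}\big|_0$ with $\fint_S$ and with the pull-back $\ev^*$; everything else is bookkeeping. Both points are handled by the properties of fiber integration over a compact boundaryless fiber collected in the appendix together with the smooth dependence of $\ev$ and $\bar\ph$ on their arguments in the convenient calculus of \cite{KM97}, so I would simply reference those and keep the exposition short, mirroring the brevity of the proof of Theorem~\ref{dddd}.
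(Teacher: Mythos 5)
Your proof is correct and follows essentially the same route as the paper: the identities $\ev\o(1_S\x\bar\ph)=\ph\o\ev$ and $\pr\o(1_S\x\bar\ph)=\pr$ plus naturality of fiber integration, with the infinitesimal version obtained by differentiating along the flow of $X$ (the paper simply asserts this last step). One small correction: the appendix formula you need for commuting $\bar\ph^*$ with $\fint_S$ is \eqref{r1} (pull-back of fiber integrals), not \eqref{r4}, which concerns the boundary term in Stokes' formula.
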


\begin{proof}
Using the expression \eqref{use} of the hat pairing 
and identity \eqref{r1} from the appendix, we have:
\begin{align*}
\bar\ph^*\widehat{\om\cdot\al}&
=\bar\ph^*\fint_S\ev^*\om\wedge\pr^*\al
=\fint_S(1_S\x\bar\ph)^*(\ev^*\om\wedge\pr^*\al)\\
&=\fint_S\ev^*\ph^*\om\wedge\pr^*\al
=\widehat{(\ph^*\om)\cdot\al},
\end{align*}
since $\pr\o(1_S\x\bar\ph)=\pr$ and $\ev\o(1_S\x\bar\ph)=\ph\o\ev$.
\end{proof}

A similar result is obtained for any smooth map $\et\in\F(M_1,M_2)$
and its push-forward $\bar\et:\F(S,M_1)\to\F(S,M_2)$, $\bar\et(f)=\et\o f$:
\begin{equation*}
\bar\et^*\widehat{\om\cdot\al}=\widehat{\et^*\om\cdot\al}, 
\end{equation*}
for all $\om\in\Om^p(M_2)$ and $\al\in\Om^q(S)$.

\begin{lemm}\label{ins}
For all vector fields $X\in\X(M)$, the identity $i_{\bar X}\widehat{\om\cdot\al}=\widehat{(i_X\om)\cdot\al}$ holds.
\end{lemm}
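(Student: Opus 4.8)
The plan is to reduce everything to the explicit pointwise formula \eqref{ffff} for the hat pairing together with the elementary fact that the interior product commutes with evaluation. First I would record the degree bookkeeping: since $i_X\om\in\Om^{p-1}(M)$, the form $\widehat{(i_X\om)\cdot\al}$ has degree $(p-1)+q-k=p+q-k-1$, which is the same as the degree of $i_{\bar X}\widehat{\om\cdot\al}$, so the asserted identity at least makes sense dimensionally.

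Next I would fix $f\in\F(S,M)$ and tangent vectors $Z^1,\dots,Z^{p+q-k-1}\in\Ga(f^*TM)=T_f\F(S,M)$, and evaluate the left-hand side. Since $\bar X(f)=X\o f$ is itself a section of $f^*TM$, I insert it as the first argument in \eqref{ffff}:
\[
(i_{\bar X}\widehat{\om\cdot\al})_f(Z^1,\dots,Z^{p+q-k-1})
=\int_Sf^*\big(i_{Z^{p+q-k-1}}\dots i_{Z^1}i_{X\o f}(\om\o f)\big)\wedge\al.
\]
The one observation that does the work is the pointwise identity $i_{X\o f}(\om\o f)=(i_X\om)\o f$ of sections of $f^*(\La^{p-1}T^*M)$: at each $x\in S$ this is just $i_{X(f(x))}\om_{f(x)}=(i_X\om)_{f(x)}$. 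Substituting it, the right-hand side becomes $\int_Sf^*\big(i_{Z^{p+q-k-1}}\dots i_{Z^1}((i_X\om)\o f)\big)\wedge\al$, which by \eqref{ffff} applied to the pair $(i_X\om,\al)$ is exactly $(\widehat{(i_X\om)\cdot\al})_f(Z^1,\dots,Z^{p+q-k-1})$. As $f$ and the $Z^i$ are arbitrary, this finishes the proof.

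I expect no genuine obstacle; the only point requiring a moment's care is the placement of $i_{X\o f}$, but because $\bar X$ goes into the first slot of \eqref{ffff} it becomes the innermost contraction, which is precisely where $i_X$ sits in $i_X\om$, so no sign appears. As an alternative I could argue from the fiber-integral definition \eqref{use}: the vector field $0\oplus\bar X$ on $S\x\F(S,M)$ that vanishes along $S$ satisfies $T\ev\o(0\oplus\bar X)=X\o\ev$ and $T\pr\o(0\oplus\bar X)=0$, hence $i_{0\oplus\bar X}(\ev^*\om\wedge\pr^*\al)=\ev^*(i_X\om)\wedge\pr^*\al$; then $i_{\bar X}\widehat{\om\cdot\al}=\fint_S i_{0\oplus\bar X}(\ev^*\om\wedge\pr^*\al)=\widehat{(i_X\om)\cdot\al}$, using that contraction with a vector field on the base commutes with fiber integration (a property recorded in the appendix). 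The direct computation via \eqref{ffff} sidesteps even that.
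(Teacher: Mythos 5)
Your proof is correct. Your primary argument, via the explicit formula \eqref{ffff}, differs from the paper's: the paper works entirely with the fiber-integral definition \eqref{use}, noting that $0_S\times\bar X$ is $\ev$-related to $X$ and projects to zero under $\pr$, and then invokes the insertion property \eqref{r3} from the appendix to pull $i_{\bar X}$ inside $\fint_S$ — exactly the alternative you sketch in your last paragraph. Your pointwise route is more elementary and self-contained once \eqref{ffff} is accepted, and you correctly handle the only delicate point (the first slot of \eqref{ffff} corresponds to the innermost contraction, and $i_{X\circ f}(\om\circ f)=(i_X\om)\circ f$ as sections of $f^*(\La^{p-1}T^*M)$), so no sign arises. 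What the paper's route buys is uniformity: the same fiber-integration formalism handles Proposition \ref{pppp}, Lemma \ref{ins2} and Theorem \ref{dddd} with minimal pointwise bookkeeping, whereas your route leans on the equivalence of \eqref{use} and \eqref{ffff}, which the paper establishes but does not re-prove in detail. Either argument is acceptable.
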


\begin{proof}
The vector field $0_S\x \bar X$ on $S\x \F(S,M)$ is $\ev$-related to the vector field $X$ on $M$, so
\begin{align*}
i_{\bar X}\widehat{\om\cdot\al}&
=i_{\bar X}\fint_S \ev^*\om\wedge\pr^*\al
=\fint_Si_{0_S\x \bar X}(\ev^*\om\wedge\pr^*\al)\\
&=\fint_S\ev^*(i_X\om)\wedge\pr^*\al
%+(-1)^p\int_S\ev^*\om\wedge\pr^* (i_{0_S}\al)
=\widehat{(i_X\om)\cdot\al}.
\end{align*}
At step two we use formula \eqref{r3} from the appendix.
\end{proof}

%%%%%%%%%%%%%%%%%%%%%%%%%%%%%

\paragraph{Right $\Diff(S)$ action.}
The natural right action of the diffeomorphism group $\Diff(S)$ on $\F(S,M)$ 
can be transformed into a left action 
by
$\ps\cdot f=f\o\ps^{-1}$. The infinitesimal action of $Z\in\X(S)$ is the vector field $\hat Z$ on $\F(S,M)$:
\[
\widehat Z(f)=-Tf\o Z,\quad\forall f\in\F(S,M).
\]
We denote by $\widehat\ps$ the diffeomorphism of $\F(S,M)$ induced by 
the action of $\ps$, so $\widehat\ps(f)=f\o\ps^{-1}$
is the pull-back by $\ps^{-1}$.

\begin{prop}\label{five}
Given $\om\in\Om^p(M)$ and $\al\in\Om^q(S)$, the identity
\begin{align*}
\widehat\ps^*\widehat{\om\cdot\al}&=\widehat{\om\cdot\ps^*\al}
\end{align*}
and its infinitesimal version 
\begin{align*}
L_{\widehat Z}\widehat{\om\cdot\al}&=\widehat{\om\cdot L_Z\al}
\end{align*}
hold for all orientation preserving $\ps\in\Diff(S)$ and $Z\in\X(S)$.
\end{prop}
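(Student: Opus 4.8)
The strategy mirrors the proof of Proposition~\ref{pppp} for the left $\Diff(M)$ action, but now the diffeomorphism acts on the source factor $S$ rather than on $M$. First I would express everything through the fiber-integral formula \eqref{use}, $\widehat{\om\cdot\al}=\fint_S\ev^*\om\wedge\pr^*\al$, and pull back by $\widehat\ps$. The key point is to identify the map on $S\x\F(S,M)$ that covers $\widehat\ps:\F(S,M)\to\F(S,M)$, namely $\ps\x\widehat\ps$ (or $\ps^{-1}\x\widehat\ps$, depending on the sign convention), and to check the two compatibility relations $\pr\o(\ps\x\widehat\ps)=\ps\o\pr$ and $\ev\o(\ps\x\widehat\ps)=\ev$. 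The second follows from $\ev(\ps(x),f\o\ps^{-1})=(f\o\ps^{-1})(\ps(x))=f(x)=\ev(x,f)$; the first is immediate.

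Then I would invoke the naturality of fiber integration under the diffeomorphism $\ps$ of the fiber $S$ — this is identity \eqref{r1} (or a closely related one, e.g.\ \eqref{r2}) from the appendix, which says that fiber integration commutes with pulling back along a map that restricts to an orientation-preserving diffeomorphism on fibers; here is exactly where the hypothesis that $\ps$ be orientation preserving enters. Combining these gives
\begin{align*}
\widehat\ps^*\widehat{\om\cdot\al}
&=\widehat\ps^*\fint_S\ev^*\om\wedge\pr^*\al
=\fint_S(\ps\x\widehat\ps)^*(\ev^*\om\wedge\pr^*\al)\\
&=\fint_S\ev^*\om\wedge\pr^*\ps^*\al
=\widehat{\om\cdot\ps^*\al}.
\end{align*}
For the infinitesimal version one differentiates the first identity along the flow $\Fl^Z_t$ of $Z\in\X(S)$: since $\widehat{\Fl^Z_t}$ is the flow of $-\widehat Z$ on $\F(S,M)$ (up to the sign built into the definition $\widehat Z(f)=-Tf\o Z$), taking $\frac{d}{dt}\big|_{t=0}$ of $(\widehat{\Fl^Z_{-t}})^*\widehat{\om\cdot\al}=\widehat{\om\cdot(\Fl^Z_{-t})^*\al}$ yields $L_{\widehat Z}\widehat{\om\cdot\al}=\widehat{\om\cdot L_Z\al}$, matching the sign conventions so that the minus signs cancel.

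The only real subtlety — and the step I would treat most carefully — is the bookkeeping of signs and of inverses: whether the covering map is $\ps\x\widehat\ps$ or $\ps^{-1}\x\widehat\ps$, and correspondingly whether one gets $\ps^*\al$ or $(\ps^{-1})^*\al$, must be reconciled with the convention $\widehat\ps(f)=f\o\ps^{-1}$ and with the sign in $\widehat Z(f)=-Tf\o Z$. Once the covering map is pinned down, the computation is a direct transcription of the previous proof, the one genuine input being the appendix identity on naturality of fiber integration under fiberwise orientation-preserving diffeomorphisms.
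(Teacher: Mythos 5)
Your argument is essentially the paper's: the paper pulls back under $1_S\x\widehat\ps$ via \eqref{r1} and then disposes of the resulting $(\ps^{-1}\x 1_\F)^*$ on the $\ev^*\om$ factor by the invariance \eqref{r2}, which is exactly your single covering map $\ps\x\widehat\ps=(1_S\x\widehat\ps)\o(\ps\x 1_\F)$ with the compatibilities $\ev\o(\ps\x\widehat\ps)=\ev$ and $\pr\o(\ps\x\widehat\ps)=\ps\o\pr$; you also correctly locate the orientation-preserving hypothesis in \eqref{r2}. One small correction in the infinitesimal step: since the left action is $\widehat\ps(f)=f\o\ps^{-1}$, the curve $t\mapsto\widehat{\Fl^Z_t}$ is already the flow of $\widehat Z$ (the minus sign in $\widehat Z(f)=-Tf\o Z$ is produced by the inverse, not an extra datum to compensate), so one differentiates $(\widehat{\Fl^Z_t})^*\widehat{\om\cdot\al}=\widehat{\om\cdot(\Fl^Z_t)^*\al}$ at $t=0$ directly; your version with $\Fl^Z_{-t}$ on both sides, taken literally, would yield $L_{\widehat Z}\widehat{\om\cdot\al}=-\widehat{\om\cdot L_Z\al}$, so no minus signs should be introduced in the first place rather than ``cancelled''.
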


\begin{proof}
The obvious identities $\ev\o(1_S\x \widehat\ps)=\ev\o(\ps^{-1}\x 1_\F)$,
$\pr\o(1_S\x \widehat\ps)=\pr$ and $\pr\o(\ps\x 1_\F)=\ps\o\pr$
are used in the computation
\begin{align*}
\widehat\ps^*\widehat{\om\cdot\al}&
=\widehat\ps^*\fint_S\ev^*\om\wedge\pr^*\al
=\fint_S(1_S\x\widehat\ps)^*(\ev^*\om\wedge\pr^*\al)\\
&=\fint_S\big((\ps^{-1}\x 1_\F)^*\ev^*\om\big)\wedge\pr^*\al
=\fint_S\ev^*\om\wedge(\ps\x 1_\F)^*\pr^*\al\\
&=\fint_S\ev^*\om\wedge\pr^*\ps^*\al
=\widehat{\om\cdot\ps^*\al},
\end{align*}
together with formula \eqref{r2} from the appendix at step four.
\end{proof}

\begin{lemm}\label{ins2}
The identity
$i_{\widehat Z}\widehat{\om\cdot\al}=(-1)^{p}\widehat{\om\cdot i_Z\al}$ holds for all vector fields $Z\in\X(S)$, 
if $\om\in\Om^p(M)$.
\end{lemm}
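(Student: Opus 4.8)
The plan is to compute $i_{\widehat Z}\widehat{\om\cdot\al}$ directly from the fiber-integral formula \eqref{use}, in close parallel with the proof of Lemma~\ref{ins}. The key observation is to identify a vector field on the product $S\x\F(S,M)$ that is $\pr$-related to $Z$ and $\ev$-related to the zero vector field on $M$; the natural candidate is $Z\x \widehat Z$, whose value at $(x,f)$ is $(Z(x),\widehat Z(f))=(Z(x),-Tf\o Z(x))$. Since $T_{(x,f)}\ev(Z(x),-Tf(Z(x)))=Tf(Z(x))-Tf(Z(x))=0$, this vector field is indeed $\ev$-related to $0_M$, while it is visibly $\pr$-related to $Z$. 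First I would record these two relatedness facts.

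Next I would insert $Z\x\widehat Z$ into $\ev^*\om\wedge\pr^*\al$ and use the derivation property of interior product together with the appropriate functoriality formula from the appendix (the analogue of \eqref{r3}, for a vector field that is related to given vector fields under both projections). Because $Z\x\widehat Z$ is $\ev$-related to $0_M$, we get $i_{Z\x\widehat Z}\ev^*\om=\ev^*(i_{0_M}\om)=0$. Hence the Leibniz rule gives
\[
i_{Z\x\widehat Z}(\ev^*\om\wedge\pr^*\al)=(-1)^p\,\ev^*\om\wedge i_{Z\x\widehat Z}\pr^*\al=(-1)^p\,\ev^*\om\wedge\pr^*(i_Z\al),
\]
using that $Z\x\widehat Z$ is $\pr$-related to $Z$. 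Applying fiber integration and commuting $i_{\widehat Z}$ through $\fint_S$ via the appendix formula then yields
\[
i_{\widehat Z}\widehat{\om\cdot\al}=\fint_S i_{Z\x\widehat Z}(\ev^*\om\wedge\pr^*\al)=(-1)^p\fint_S\ev^*\om\wedge\pr^*(i_Z\al)=(-1)^p\widehat{\om\cdot i_Z\al},
\]
which is the claimed identity.

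I expect the main subtlety to be purely bookkeeping: making sure the sign $(-1)^p$ is placed correctly when pulling $i_{Z\x\widehat Z}$ past the factor $\ev^*\om$ of degree $p$, and citing the correct appendix lemma for pushing an interior product through the fiber integral (the same mechanism used in Lemma~\ref{ins}, where the relevant vector field was $0_S\x\bar X$, here replaced by $Z\x\widehat Z$). One should also double-check that $Z\x\widehat Z$ is genuinely a well-defined vector field on $S\x\F(S,M)$, i.e.\ that $\widehat Z$ depends smoothly on $f$, which follows since $\widehat Z$ is by construction the infinitesimal generator of the (smooth) $\Diff(S)$-action. No genuine obstacle is anticipated beyond these routine verifications; the geometric content is entirely captured by the single relatedness computation $T\ev\cdot(Z,-Tf\cdot Z)=0$.
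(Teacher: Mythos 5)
Your argument is correct and rests on exactly the same geometric fact as the paper's proof: the paper records it as $T\ev.(0_S\x\widehat Z)=T\ev.(-Z\x 0_{\F(S,M)})$, which is equivalent to your statement that $Z\x\widehat Z$ is $\ev$--related to $0_M$. Where you differ is in the bookkeeping: the paper inserts $0_S\x\widehat Z$ via \eqref{r3}, kills the $\pr^*\al$ term, trades $0_S\x\widehat Z$ for $-Z\x 0_{\F(S,M)}$ on the factor $\ev^*\om$, and then re-expands by Leibniz; you instead work throughout with the diagonal field $Z\x\widehat Z$, which makes the Leibniz step a one-liner since $i_{Z\x\widehat Z}\ev^*\om=0$ outright. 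The one point you should make explicit is the identity $i_{\widehat Z}\fint_S\be=\fint_S i_{Z\x\widehat Z}\be$: formula \eqref{r3} as stated only gives $i_{\widehat Z}\fint_S\be=\fint_S i_{0_S\x\widehat Z}\be$, so you must additionally argue that the vertical part contributes nothing, i.e.\ that
\[
\fint_S i_{Z\x 0_{\F(S,M)}}\be=0 ,
\]
which holds because inserting a vector tangent to the $k$--dimensional fiber $S$ produces a form whose component of vertical degree $k$ comes from a (necessarily vanishing) component of vertical degree $k+1$ of $\be$. This is an easy and standard property of fiber integration — and the paper's own proof uses it just as tacitly in its penultimate step, where $\fint_S i_{-Z\x 0_{\F(S,M)}}(\ev^*\om\wedge\pr^*\al)$ is silently discarded — but since it is not one of the listed appendix formulas, it deserves a sentence. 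With that added, your proof is complete.
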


\begin{proof}
The infinitesimal version of the first identity in the proof of proposition \ref{five} is $T\ev.(0_S\x \widehat Z)=T\ev.(-Z\x 0_{\F(S,M)})$, so we compute:
\begin{align*}
i_{\widehat Z}\widehat{\om\cdot\al}&
=i_{\widehat Z}\fint_S \ev^*\om\wedge\pr^*\al
=\fint_S i_{0_S\x \widehat Z}(\ev^*\om\wedge\pr^*\al)\\
&=\fint_S (i_{0_S\x \widehat Z}\ev^*\om)\wedge\pr^*\al
=\fint_S(i_{-Z\x 0_{\F(S,M)}}\ev^*\om)\wedge\pr^*\al\\
&=\fint_Si_{-Z\x 0_{\F(S,M)}}(\ev^*\om\wedge\pr^*\al)
-\fint_S(-1)^p\ev^*\om\wedge i_{-Z\x 0_{\F(S,M)}}\pr^*\al\\
&=(-1)^{p}\fint_S\ev^*\om\wedge\pr^*(i_Z\al)
=(-1)^{p}\widehat{\om\cdot i_Z\al}.
\end{align*}
At step two we use formula \eqref{r3} from the appendix.
\end{proof}

%%%%%%%%%%%%%%%%%%%%%
%%%%%%%%%%%%%%%%%%%%%%%%%%%%%%%%%

\section{Tilda map and hat map}\label{s3}

Let $\Gr_k(M)$ be the non-linear Grassmannian (or differentiable Chow variety) 
of compact oriented $k$--dimensional submanifolds of $M$. 
It is a Fr\' echet manifold \cite{KM97} and the tangent space 
at $N\in\Gr_k(M)$ can be identified with the space of smooth sections of the normal bundle $TN^\perp=(TM|_N)/TN$.
The tangent vector at $N$ determined by the section $Y_N\in\Ga(TM|_N)$ is
denoted by $\tilde Y_N\in T_N\Gr_k(M)$.

The {\it tilda map} \cite{HV04} associates to any $p$--form $\om$ on $M$ a $(p-k)$--form
$\tilde\om$ on $\Gr_k(M)$ by:
\begin{equation}\label{tide}
\tilde\om_N(\tilde Y_N^1,\dotsc,\tilde Y_N^{p-k})
=\int_Ni_{Y_N^{p-k}}\cdots i_{Y_N^1}\om.
\end{equation}
Here all $\tilde Y_N^j$ are tangent vectors at $N\in\Gr_k(M)$, \ie sections
of $TN^\perp$ represented by sections $Y_N^j$ of $TM|_N$. 
Then $i_{Y_N^{p-k}}\cdots i_{Y_N^1}\om\in\Omega^k(N)$
does not depend on representatives $Y_N^j$ of $\tilde Y_N^j$, and integration is well defined since
$N\in\Gr_k(M)$ comes with an orientation. 

Let $S$ be a compact oriented $k$--dimensional manifold. The {\it hat map} is the hat pairing with the constant function $1\in\Om^0(S)$. It associates to any form $\om\in\Om^p(M)$ the form $\widehat\om\in\Om^{p-k}(\F(S,M))$:
\begin{equation}\label{hatm}
\widehat\om=\widehat{\om\cdot 1}=\fint_S\ev^*\om.
\end{equation}
On the open subset $\Emb(S,M)\subset\F(S,M)$ of embeddings, formula \eqref{ffff} gives 
\begin{equation}\label{exte}
\widehat{\om}(X_1\o f,\dots, X_{p-k}\o f)=\int_Sf^*(i_{X_{p-k}}\dots i_{X_1}\om).
\end{equation}

\begin{rema}
The hat map induces a transgression on cohomology spaces 
\[
H^p(M)\to H^{p-k}(\F(S,M)).
\]
When $S$ is the circle, then one obtains the usual transgression map 
with values in the $(p-1)$-th cohomology space of the free loop space of $M$.
\end{rema}

Let $\pi$ denote the natural map 
\[
\pi:\Emb(S,M)\to\Gr_k(M),\quad\pi(f)=f(S).
\]
where the orientation on $f(S)$ is chosen such that the diffeomorphism $f:S\to f(S)$ is orientation preserving. 
The image $\pi(\Emb(S,M))$ is the manifold $\Gr_k^S(M)$ 
of $k$--dimensional submanifolds of $M$ of type $S$.
Then $\pi:\Emb(S,M)\to\Gr_k^S(M)$
is a principal bundle over $\Gr_k^S(M)$  with structure group $\Diff_+(S)$, the group of orientation preserving diffeomorphisms of $S$. 

Note that there is a natural action of the group $\Diff(M)$ on 
the non-linear Grassmannian $\Gr_k(M)$ given by $\ph\cdot N=\ph(N)$. Let $\tilde\ph$ be the diffeomorphism of $\Gr_k(M)$ induced by the action of $\ph\in\Diff(M)$. Then $\tilde\ph\o\pi=\pi\o\bar\ph$ for the restriction of $\bar\ph(f)=\ph\o f$ to a diffeomorphism of 
$\Emb(S,M)\subset\F(S,M)$. As a consequence, the infinitesimal 
generators for the $\Diff(M)$ actions on $\Gr_k(M)$ and on $\Emb(S,M)$ are $\pi$--related. This means that for all $X\in\X(M)$, the vector fields $\tilde X$ on $\Gr_k(M)$ given by $\tilde X(N)=X|_N$ and $\bar X$ on $\Emb(S,M)$ given by $\bar X(f)=X\o f$ are $\pi$--related.

\begin{prop}
The hat map on $\Emb(S,M)$ and the tilda map on $\Gr_k(M)$ are related by 
$\widehat\om=\pi^*\tilde\om$,
for any $k$--dimensional oriented manifold $S$.
\end{prop}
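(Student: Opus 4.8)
The plan is to compare the two $(p-k)$-forms directly at an arbitrary point $f\in\Emb(S,M)$, evaluated on an arbitrary $(p-k)$-tuple of tangent vectors. The point is that $\X(M)$ acts infinitesimally transitively on $\Emb(S,M)$, so any tangent vector at $f$ is of the form $X\circ f=\bar X(f)$ for some $X\in\X(M)$, and it suffices to check the identity on tuples of such vectors. On the Grassmannian side, recall from Section~\ref{s3} that $\bar X$ on $\Emb(S,M)$ and $\tilde X$ on $\Gr_k(M)$ (given by $\tilde X(N)=X|_N$) are $\pi$--related. Hence for $X_1,\dots,X_{p-k}\in\X(M)$ one has, by naturality of pullback,
\[
(\pi^*\tilde\om)_f(\bar X_1(f),\dots,\bar X_{p-k}(f))
=\tilde\om_{\pi(f)}(\tilde X_1(\pi(f)),\dots,\tilde X_{p-k}(\pi(f)))
=\tilde\om_{f(S)}(\widetilde{X_1|_{f(S)}},\dots,\widetilde{X_{p-k}|_{f(S)}}).
\]

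Next I would expand the right-hand side using the definition \eqref{tide} of the tilda map: this equals $\int_{f(S)} i_{X_{p-k}}\cdots i_{X_1}\om$, where the orientation on $N=f(S)$ is the one for which $f:S\to f(S)$ is orientation preserving. On the other side, the explicit formula \eqref{exte} for the hat map on embeddings gives
\[
\widehat\om_f(\bar X_1(f),\dots,\bar X_{p-k}(f))=\int_S f^*\big(i_{X_{p-k}}\cdots i_{X_1}\om\big).
\]
So the whole statement reduces to the change-of-variables identity $\int_S f^*\eta=\int_{f(S)}\eta$ for the $k$-form $\eta=i_{X_{p-k}}\cdots i_{X_1}\om\in\Om^k(M)$, which holds precisely because $f:S\to f(S)$ is an orientation-preserving diffeomorphism onto its image. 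One should also note, as already observed after \eqref{embf}, that the restriction of $i_{X_{p-k}}\cdots i_{X_1}\om$ to $f(S)$ represents the normal-bundle contraction used in \eqref{tide} independently of the choice of representatives, and that the pull-back $f^*\big(i_{X_{p-k}}\cdots i_{X_1}\om\big)$ in \eqref{exte} is the restricted pull-back of $i_{X_{p-k}\circ f}\cdots i_{X_1\circ f}(\om\circ f)$, so the two expressions genuinely match term by term.

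Since both $\widehat\om$ and $\pi^*\tilde\om$ are globally defined smooth forms on $\Emb(S,M)$ and they agree on every tuple of tangent vectors of the form $(\bar X_1(f),\dots,\bar X_{p-k}(f))$, and such tuples span $\Lambda^{p-k}T_f\Emb(S,M)$ by infinitesimal transitivity of the $\X(M)$-action \cite{H76}, the two forms coincide. The main thing to be careful about is the orientation bookkeeping in the definition of $\pi$ together with the fiber-integration sign conventions implicit in \eqref{hatm}; once the orientation on $f(S)$ is fixed as in the definition of $\pi$, no extra signs appear, and the identity is clean.
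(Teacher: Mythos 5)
Your proposal is correct and follows essentially the same route as the paper: infinitesimal transitivity reduces the check to vectors of the form $X\circ f$, the $\pi$--relatedness of $\bar X$ and $\tilde X$ identifies the two sides via \eqref{tide} and \eqref{exte}, and the change-of-variables identity $\int_S f^*\eta=\int_{f(S)}\eta$ for the orientation chosen in the definition of $\pi$ closes the argument. Your additional remarks on orientation and on the restricted pull-back are consistent with the paper's (more terse) treatment.
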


\begin{proof} 
For the proof we use the fact that $\X(M)$ acts infinitesimally transitive on $\Emb(S,M)$, so  $T_f\Emb(S,M)=\{X\o f:X\in\X(M)\}$.
With \eqref{tide} and \eqref{exte} we compute:
\begin{multline*}
(\pi^*\tilde\om)_f(X_1\o f,\dots, X_{p-k}\o f)
=\tilde\om_{f(S)}(X_1|_{f(S)},\dots,X_{p-k}|_{f(S)})\\
=\int_{f(S)} i_{X_{p-k}}\dots i_{X_1}\om=\int_Sf^*(i_{X_{p-k}}\dots i_{X_1}\om)=\widehat\om_f(X_1\o f,\dots,X_{p-k}\o f),
\end{multline*}
since $\bar X$ and $\tilde X$ are $\pi$--related.
\end{proof}

From the properties of the hat pairing presented in proposition \ref{pppp}, lemma \ref{ins} and theorem \ref{dddd}, a hat calculus
follows easily:

\begin{prop}\label{cor3}
For any $\om\in\Om^p(M)$, $\ph\in\Diff(M)$, $X\in\X(M)$, and $\et\in\F(M', M)$ with push-forward $\bar\et:\F(S,M')\to\F(S,M)$, the following identities hold: 
\begin{enumerate}
\item $\bar\ph^*\widehat{\om}=\widehat{\ph^*\om}$ and $\bar\et^*\widehat\om=\widehat{\et^*\om}$
\item $L_{\bar X}\widehat\om=\widehat{L_X\om}$
\item $i_{\bar X}\widehat\om=\widehat{i_X\om}$
\item $\dd\widehat\om=\widehat{\dd\om}$. 
\end{enumerate}
\end{prop}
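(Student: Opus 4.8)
The plan is to deduce Proposition \ref{cor3} directly from the general hat-pairing results by specializing $\al$ to the constant function $1\in\Om^0(S)$, so that $\widehat{\om\cdot 1}=\widehat\om$ by definition \eqref{hatm}. First I would observe that $1$ is $\Diff(S)$-invariant, $\dd 1=0$, and $i_X 1=0$ for all vector fields, so the "$S$-side" terms in every identity of Section \ref{calc} drop out; what remains is exactly the desired "$M$-side" statement.

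For item (1), I would apply Proposition \ref{pppp}, equation \eqref{star}, with $\al=1$: $\bar\ph^*\widehat\om=\bar\ph^*\widehat{\om\cdot 1}=\widehat{(\ph^*\om)\cdot 1}=\widehat{\ph^*\om}$; the second half, $\bar\et^*\widehat\om=\widehat{\et^*\om}$, follows identically from the remark after Proposition \ref{pppp} on push-forwards of arbitrary smooth maps $\et$, again taking $\al=1$. For item (2), I would apply the infinitesimal version \eqref{el}: $L_{\bar X}\widehat\om=L_{\bar X}\widehat{\om\cdot 1}=\widehat{(L_X\om)\cdot 1}=\widehat{L_X\om}$; alternatively this also follows by differentiating item (1) along the flow of $X$. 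For item (3), I would invoke Lemma \ref{ins}: $i_{\bar X}\widehat\om=i_{\bar X}\widehat{\om\cdot 1}=\widehat{(i_X\om)\cdot 1}=\widehat{i_X\om}$. For item (4), I would use Theorem \ref{dddd}, equation \eqref{deri}, with $\al=1$ (so $q=0$ and $\dd\al=0$): $\dd\widehat\om=\dd\widehat{\om\cdot 1}=\widehat{(\dd\om)\cdot 1}+(-1)^p\widehat{\om\cdot\dd 1}=\widehat{\dd\om}$.

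There is essentially no obstacle here: the proposition is a straightforward corollary, as the text preceding it already announces ("a hat calculus follows easily"). The only point requiring a word of care is that Proposition \ref{pppp}, Lemma \ref{ins} and Theorem \ref{dddd} are stated for the hat pairing on all of $\F(S,M)$, whereas the hat map is often used on the subset $\Emb(S,M)$; but since $\Emb(S,M)$ is an \emph{open} subset, restriction commutes with $\dd$, $L_{\bar X}$, $i_{\bar X}$ and pullback by the (restricted) diffeomorphisms $\bar\ph$, so no separate argument is needed. I would therefore write the proof as a single short paragraph listing the four substitutions.

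\begin{proof}
All four identities are the specialization to the constant function $1\in\Om^0(S)$ of the corresponding statements for the hat pairing, using $\widehat\om=\widehat{\om\cdot 1}$ from \eqref{hatm} together with $\ph^*1=1$, $\et^*1=1$, $L_X 1=0$, $i_X 1=0$ and $\dd 1=0$. Indeed, item (1) follows from identity \eqref{star} in proposition \ref{pppp} and from the remark on push-forwards following it; item (2) follows from the infinitesimal version \eqref{el}; item (3) follows from lemma \ref{ins}; and item (4) follows from \eqref{deri} in theorem \ref{dddd} with $q=0$.
\end{proof}
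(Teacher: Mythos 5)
Your proposal is correct and matches the paper's intent exactly: the paper gives no separate proof, stating only that the identities "follow easily" from Proposition \ref{pppp}, Lemma \ref{ins} and Theorem \ref{dddd}, which is precisely the specialization to $\al=1$ that you carry out. Your explicit substitutions (using $\dd 1=0$ so the second term in \eqref{deri} vanishes, and \eqref{star}, \eqref{el}, Lemma \ref{ins} for the rest) are the intended argument.
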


\begin{rema}
If $S$ is a manifold with boundary, then the formula 4. above
receives an extra term coming from integration over the boundary $\pa S$ as in proposition \ref{inde}:
\begin{equation}\label{bond}
\dd\widehat\om=\widehat{\dd\om}+(-1)^{p-k}r_\pa^*\widehat\om^\pa
\end{equation}
for $\om\in\Om^p(M)$. As before, $r_\pa:\F(S,M)\to\F(\pa S,M)$ denotes the restriction map on functions
and $\om\in\Om^p(M)\mapsto\widehat\om^\pa\in\Om^{p-k+1}(\F(\pa S,M))$. 
\end{rema}

Now the properties of the tilda calculus follow imediately from proposition \ref{cor3}. 

\begin{prop}\cite{HV04}\label{cor4}
For any $\om\in\Om^p(M)$, $\ph\in\Diff(M)$ and $X\in\X(M)$, the following identities hold: 
\begin{enumerate}
\item $\tilde\ph^*\tilde{\om}=\widetilde{\ph^*\om}$
\item $L_{\tilde X}\tilde\om=\widetilde{L_X\om}$
\item $i_{\tilde X}\tilde\om=\widetilde{i_X\om}$
\item $\dd\tilde\om=\widetilde{\dd\om}$.
\end{enumerate}
\end{prop}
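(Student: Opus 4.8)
The plan is to derive each of the four identities for the tilda calculus directly from the corresponding identity in the hat calculus (Proposition \ref{cor3}), using the relation $\widehat\om=\pi^*\tilde\om$ established just above together with the $\pi$-relatedness of the infinitesimal generators and the fact that $\pi:\Emb(S,M)\to\Gr_k^S(M)$ is a (surjective) principal $\Diff_+(S)$-bundle. The key structural point is that $\pi$ is a surjective submersion, so $\pi^*$ is injective on differential forms; hence to prove an identity $A=B$ between forms on $\Gr_k(M)$ it suffices to check $\pi^*A=\pi^*B$ on $\Emb(S,M)$, after first noting both sides are supported on $\Gr_k^S(M)$ (and pulling back via different embedding-type manifolds $S$ covers all of $\Gr_k(M)$).

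For item (1), I would use the compatibility $\tilde\ph\o\pi=\pi\o\bar\ph$ noted in the excerpt: compute $\pi^*(\tilde\ph^*\tilde\om)=(\tilde\ph\o\pi)^*\tilde\om=(\pi\o\bar\ph)^*\tilde\om=\bar\ph^*\pi^*\tilde\om=\bar\ph^*\widehat\om=\widehat{\ph^*\om}=\pi^*\widetilde{\ph^*\om}$, and then invoke injectivity of $\pi^*$. For item (4), differentiation commutes with pullback, so $\pi^*(\dd\tilde\om)=\dd(\pi^*\tilde\om)=\dd\widehat\om=\widehat{\dd\om}=\pi^*\widetilde{\dd\om}$, and again cancel $\pi^*$. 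Items (2) and (3) are the ones requiring the $\pi$-relatedness of $\tilde X$ on $\Gr_k(M)$ and $\bar X$ on $\Emb(S,M)$: since Lie derivative and interior product are natural with respect to pullback along a map that intertwines $\pi$-related vector fields, we get $\pi^*(L_{\tilde X}\tilde\om)=L_{\bar X}(\pi^*\tilde\om)=L_{\bar X}\widehat\om=\widehat{L_X\om}=\pi^*\widetilde{L_X\om}$ and similarly $\pi^*(i_{\tilde X}\tilde\om)=i_{\bar X}(\pi^*\tilde\om)=i_{\bar X}\widehat\om=\widehat{i_X\om}=\pi^*\widetilde{i_X\om}$, then cancel $\pi^*$ in each case.

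The one technical point I would want to state carefully — and what I expect to be the only real obstacle — is the justification that $\pi^*$ is injective on forms, i.e.\ that for every $N\in\Gr_k(M)$ and every tangent vector $\tilde Y_N$ there is a preimage $f\in\Emb(S,M)$ (with $\pi(f)=N$, so $N$ must be of type $S$) and a tangent vector at $f$ mapping to $\tilde Y_N$ under $T_f\pi$; this follows because $\pi$ is a submersion with local sections and because every compact oriented $k$-submanifold $N\subset M$ is diffeomorphic to some model $S$, so running the argument over all model manifolds $S$ exhausts $\Gr_k(M)$. Since the proposition is explicitly attributed to \cite{HV04}, a short proof emphasizing that these identities are immediate consequences of the hat calculus via $\widehat\om=\pi^*\tilde\om$ is exactly what is wanted, and I would keep the write-up to a few lines, perhaps treating items (1), (2), (3) in parallel and remarking that (4) is identical with $\dd$ in place of $L_{\bar X}$ or $i_{\bar X}$.
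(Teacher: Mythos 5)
Your proposal is correct and follows essentially the same route as the paper: pull back both sides along $\pi$ using $\widehat\om=\pi^*\tilde\om$, reduce to the hat-calculus identities of Proposition \ref{cor3} via the intertwining relations $\tilde\ph\o\pi=\pi\o\bar\ph$ and the $\pi$-relatedness of $\tilde X$ and $\bar X$, and conclude by injectivity of $\pi^*$. The only differences are cosmetic: the paper writes out items 1 and 4 and leaves 2 and 3 to the reader, while you treat all four and make explicit the surjective-submersion justification for cancelling $\pi^*$, which the paper takes for granted.
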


\begin{proof}
We verify the identities 1. and 4. From relation 1. from proposition \ref{cor3} we get that
\begin{align*}
\pi^*\tilde\ph^*\tilde\om&
=\bar\ph^*\pi^*\tilde\om
=\bar\ph^*\widehat\om
=\widehat{\ph^*\om}
=\pi^*\widetilde{\ph^*\om},
\end{align*}
and this implies the first identity.
Using identity 4. from proposition \ref{cor3} we compute
$$
\pi^*\dd\tilde\om=\dd\pi^*\tilde\om
=\dd \widehat\om
=\widehat{\dd\om}=\pi^*\widetilde{\dd\om},
$$
which shows the last identity.
\end{proof}
%%%%%%%%%%%%%

\subsection*{Hamiltonian formalism for $p$-branes}

In this section we show how the hat calculus appears in
the hamiltonian formalism for $p$-branes and open $p$-branes
\cite{AS05} \cite{BZ05}.

Let $S$ be a compact oriented $p$-dimensional manifold.
The phase space for the $p$-brane world volume
$S\x\RR$ is the cotangent bundle $T^*\F(S,M)$,
where the canonical symplectic form is twisted.
The twisting consists in adding a magnetic term, namely the pull-back of a closed 2-form on the base manifold, to the canonical symplectic form on a cotangent bundle \cite{MR99}. These twisted symplectic forms appear also in cotangent bundle reduction. 

We consider a closed differential form $H\in\Om^{p+2}(M)$.
Since $\dim S=p$, the hat map \eqref{hatm}
provides a closed 2-form $\widehat H$ on $\F(S,M)$.
If $\pi_\F:T^*\F(S,M)\to\F(S,M)$ denotes the canonical projection,
the twisted symplectic form on $T^*\F(S,M)$ is
\[
\Om_H=-\dd\Th_\F+\frac{1}{2}\pi_\F^*\widehat H,
\]
where $\Th_\F$ is the canonical 1-form on $T^*\F(S,M)$.

For the description of open branes one considers 
a compact oriented $p$-dimensional manifold $S$ with boundary $\pa S$
and a submanifold $D$ of $M$.
The phase space is in this case the cotangent bundle $T^*\F_D(S,M)$ over the manifold \cite{M80}
\[
\F_D(S,M)=\{f:S\to M|f(\pa S)\subset D\}.
\]
The twisting of the canonical symplectic form is done with a closed 
differential form $H\in\Om^{p+2}(M)$ with $i^*H=\dd B$ for some 
$B\in\Om^{p+1}(D)$, where $i:D\to M$ denotes the inclusion.
The twisted symplectic form on $T^*\F_D(S,M)$ is
\[
\Om_{(H,B)}=-\dd\Th_{\F_D}+\frac12\pi_{\F_D}^*(\widehat H-\pa^*\widehat B^\pa)
\]
with $\pa:\F_D(S,M)\to\F(\pa S,D)$ 
the restriction map and $\pi_{\F_D}:T^*\F_D(S,M)\to\F_D(S,M)$.
To distinguish between the hat calculus 
for $\F(S,M)$
and the hat calculus for $\F(\pa S,M)$, we 
denote $\widehat \ ^\pa:\Om^n(M)\to\Om^{n-p+1}(\F(\pa S,M))$.

The only thing we have to verify is the closedness of $\widehat H-\pa^*\widehat B^\pa$.
We first notice that \eqref{bond} implies 
$\dd\widehat H=\widehat{\dd H}+r_\pa^*\widehat H^\pa$, 
where 
$r_\pa:\F(S,M)\to\F(\pa S,M)$ denotes the restriction map,
and identity 4 from proposition \ref{cor3}
implies $\widehat{\dd B}^\pa=\dd\widehat B^\pa$.
On the other hand identity 1 from proposition \ref{cor3} ensures that 
$\widehat{i^*H} ^\pa=\bar i^*\widehat H^\pa$,
with $\bar i:\F(\pa S,D)\to\F(\pa S,M)$ denoting the push-forward by $i:D\to M$.
Knowing that $r_\pa=\bar i\o\pa$, we compute:
\begin{align*}
\dd\widehat H 
=\widehat{\dd H}+r_\pa^*\widehat H^\pa
=\pa^*\bar i^*\widehat H^\pa=\pa^*\widehat{i^*H}^\pa
=\pa^*\widehat{\dd B}^\pa=\dd\pa^*\widehat B^\pa,
\end{align*}
so the closed 2--form $\widehat H-\pa^*\widehat B^\pa$ provides
a twist for the canonical symplectic form on the cotangent bundle 
$T^*\F_D(S,M)$.

%%%%

\subsection*{Non-linear Grassmannians as symplectic manifolds}

In this subsection we recall properties of the co-dimension two non-linear Grassmannian as a symplectic manifold.

\begin{prop}\cite{I96}
Let $M$ be a closed $m$--dimensional manifold with volume form $\nu$. 
The tilda map provides a symplectic form $\tilde\nu$ on $\Gr_{m-2}(M)$
$$
\tilde\nu_N(\tilde X_N, \tilde Y _N)=\int_Ni_{ Y _N}i_{X_N}\nu,
$$
for $\tilde X_N$ and $\tilde Y _N$ sections of $TN^\perp$ determined by sections $X_N$ and $Y_N$ of $TM|_N$. 
\end{prop}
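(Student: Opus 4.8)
The plan is to verify the three conditions that make $\tilde\nu$ a weak symplectic form on the Fr\'echet manifold $\Gr_{m-2}(M)$: that it is a well-defined smooth $2$-form, that it is closed, and that it is (weakly) non-degenerate. Well-definedness is immediate from the construction of the tilda map \eqref{tide} applied with $p=m$ and $k=m-2$: one obtains a smooth $2$-form $\tilde\nu\in\Om^2(\Gr_{m-2}(M))$, and $i_{Y_N}i_{X_N}\nu\in\Om^{m-2}(N)$ does not depend on the choice of representatives $X_N,Y_N\in\Ga(TM|_N)$ of the normal vectors $\tilde X_N,\tilde Y_N$, since contracting $\nu$ with a vector tangent to $N$ and then restricting to $N$ gives $0$.

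Closedness is a one-line application of the tilda calculus. By identity 4 of Proposition \ref{cor4}, $\dd\tilde\nu=\widetilde{\dd\nu}$; but $\nu$ is a volume form on the $m$-dimensional manifold $M$, so $\dd\nu=0$, and hence $\dd\tilde\nu=0$.

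The substantial point, and the step I expect to be the main obstacle, is non-degeneracy; I would argue it pointwise on $N$ and then globalize with a bump function. Fix $N\in\Gr_{m-2}(M)$. Contraction with $\nu$ induces, at each $x\in N$, a skew-symmetric bilinear map
\[
\Lambda^2(T_xN^\perp)\to\Lambda^{m-2}T_x^*N,\qquad (\bar u,\bar v)\mapsto (i_vi_u\nu)|_{T_xN},
\]
which is well defined independently of representatives $u,v\in T_xM$ (for the same reason the tilda map is well defined) and is \emph{non-degenerate}: if $u$ is not tangent to $N$, complete a basis $e_1,\dots,e_{m-2}$ of $T_xN$ by $u$ and one further vector $v$ to a basis of $T_xM$; then $(i_vi_u\nu)|_{T_xN}(e_1,\dots,e_{m-2})=\nu_x(u,v,e_1,\dots,e_{m-2})\neq0$ because $\nu$ is nowhere zero. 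Now suppose $\tilde\nu_N(\tilde X_N,\tilde Y_N)=0$ for every $\tilde Y_N\in T_N\Gr_{m-2}(M)$, and assume, for contradiction, that $\tilde X_N\neq0$, i.e.\ $X_N(x_0)\notin T_{x_0}N$ for some $x_0\in N$. Using the non-degeneracy just established together with the orientation of $N$, choose $v_0\in T_{x_0}M$ so that $(i_{v_0}i_{X_N(x_0)}\nu)|_{T_{x_0}N}$ is a positive multiple of the orientation form of $N$ at $x_0$; extend $v_0$ to a local section of $TM|_N$, multiply by a non-negative bump function $\chi$ on $N$ supported near $x_0$ with $\chi(x_0)=1$, and call the result $Y_N$. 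Then $(i_{Y_N}i_{X_N}\nu)|_N$ is a non-negative top form on $N$ that is strictly positive at $x_0$, so $\tilde\nu_N(\tilde X_N,\tilde Y_N)=\int_N(i_{Y_N}i_{X_N}\nu)>0$, a contradiction. Hence $\tilde X_N=0$.

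This shows $\tilde\nu$ is closed and weakly non-degenerate, which is the appropriate notion of a symplectic form in the Fr\'echet category: the associated map $T_N\Gr_{m-2}(M)\to T_N^*\Gr_{m-2}(M)$ is injective but need not be surjective. The only genuine difficulty is the non-degeneracy argument, where one must keep track of the orientation of $N$ entering the integral and settle for weak rather than strong non-degeneracy.
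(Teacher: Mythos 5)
Your proof follows the same route as the paper: closedness is the one-line tilda-calculus identity $\dd\tilde\nu=\widetilde{\dd\nu}=0$, and weak non-degeneracy is shown by proving that a vector field $X_N$ along $N$ with $\int_N i_{Y_N}i_{X_N}\nu=0$ for all $Y_N$ must be tangent to $N$. The paper merely asserts this last implication, whereas you correctly supply the missing detail (pointwise non-degeneracy of contraction with the volume form plus a bump-function localization), so your argument is a fleshed-out version of the paper's proof.
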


\begin{proof}
The 2--form $\tilde\nu$ is closed since $\dd\tilde\nu=\widetilde{\dd\nu}$ by the tilda calculus. To verify that it is also (weakly) non-degenerate,
let $X_N$ be an arbitrary vector field along $N$ such that $\int_Ni_{ Y _N}i_{X_N}\nu=0$ for all vector fields $Y_N$ along $N$. Then $X_N$ must be tangent to $N$, so $\tilde X_N=0$.
\end{proof}

In dimension $m=3$ the symplectic form $\tilde\nu$ is known as the 
Marsden--Weinstein 
symplectic from on the space of unparameterized oriented links, see
\cite{MW83} \cite{B93}.

\paragraph{Hamiltonian $\Diff_{ex}(M,\nu)$ action.}
The action of the group $\Diff(M,\nu)$ of volume preserving
diffeomorphisms of $M$ on $\Gr_{m-2}(M)$ 
preserves the symplectic form $\tilde\nu$: 
\[
\tilde\ph^*\tilde\nu=\widetilde{\ph^*\nu}=\tilde\nu,
\quad\forall\ph\in\Diff(M,\nu).
\]

The subgroup $\Diff_{ex}(M,\nu)$ of exact volume preserving diffeomorphisms acts in a hamiltonian way on the symplectic manifold $(\Gr_{m-2}(M),\tilde\nu)$.
Its Lie algebra is $\X_{ex}(M,\nu)$, the Lie algebra of exact divergence free vector fields, \ie vector fields $X_\al$
such that $i_{X_\al}\nu=\dd\al$ for a potential form $\al\in\Om^{m-2}(M)$.
The infinitesimal action of $X_\al$
is the vector field $\tilde X_\al$.
By the tilda calculus $\tilde\al\in\F(\Gr_{m-2}(M))$ is a hamiltonian function for the hamiltonian vector field $\tilde X_\al$:
\[
i_{\tilde X_\al}\tilde\nu=\widetilde{i_{X_\al}\nu}=\widetilde{\dd\al}=\dd\tilde\al.
\]
It depends on the particular choice of the potential $\al$ of $X_\al$.
A fixed continuous right inverse $b:\dd\Om^{m-2}(M)\to\Om^{m-2}(M)$
to the differential $\dd$ picks up a potential $b(\dd\al)$ of $X_\al$.
The corresponding momentum map is:
\begin{align*}
\mathbf{J}:\M\to \X_{ex}(M,\nu)^*,
\quad\langle\mathbf{J}(N),X_\al\rangle
=\widetilde{b(\dd\al)}(N)=\int_N b(\dd\al).
\end{align*}

On the connected component $\M$ of $N\in\Gr_{m-2}(M)$, 
the non-equivariance of $\mathbf{J}$ is measured 
by the Lie algebra 2--cocycle on $\X_{ex}(M,\nu)$
\begin{align*}
\si_N(X,Y)=&\langle\mathbf{J}(N),[X,Y]^{op}\rangle
-\tilde\nu(\tilde X,\tilde Y)(N)
=\widetilde{(b\dd i_Yi_X\nu)}(N)-\widetilde{(i_Yi_X\nu)}(N)\\
&=\widetilde{(Pi_Xi_Y\nu)}(N)
=\int_NPi_Xi_Y\nu.
\end{align*}
Here
$P=1_{\Om^{m-2}(M)}-b\o \dd$
is a continuous linear projection on the subspace of closed $(m-2)$-forms and $(X,Y)\mapsto[Pi_Yi_X\nu]\in H^{m-2}(M)$ is the universal Lie algebra 2--cocycle on $\X_{ex}(M,\nu)$ \cite{R95}. 
The cocycle $\si_{N}$ is cohomologous to the Lichnerowicz cocycle
\begin{equation}\label{lich}
\si_\et(X,Y)=\int_M\et(X,Y)\nu,
\end{equation}
where $\et$ is a closed 2-form Poincar\'e dual to $N$ \cite{V09}.

If $\nu$ is an integral volume form, then 
$\si_N$ is integrable \cite{I96}. 
The connected component $\M$ of $\Gr_{m-2}(M)$ is a coadjoint orbit 
of a 1--dimen\-sional central Lie group extension of $\Diff_{ex}(M,\nu)$ integrating $\si_{N}$, 
and $\tilde\nu$ is the Kostant-Kirillov-Souriau symplectic form. 
\cite{HV04}.

%%%%%%%%%%%%%%%%%
%%%%%%%%%%%%%%%%%%%%%%%%%

\section{Bar map}\label{s4}

When a volume form $\mu$ on the compact $k$--dimensional manifold $S$ is given, one can associate to each differential $p$-form on $M$ a differential $p$-form on $\F(S,M)$ 
\[
\bar\om(Y^1_f,\dots,Y^p_f)=\int_S \om(Y^1_f,\dots,Y^p_f)\mu,
\quad\forall Y^i_f\in T_f \mathcal{F}(S,M),
\]
where $\om(Y^1_f,\dots,Y^p_f):x\mapsto \om_{f(x)}(Y_f^1(x),\dots, Y_f^p(x))$ defines a smooth function on $S$.
In this way a {\it bar map} is defined.
Formula \eqref{ffff} assures that this bar map is just the hat pairing of differential forms on $M$ with the volume form $\mu$
\begin{equation}\label{barb}
\bar\om=\widehat{\om\cdot\mu}=\fint_S\ev^*\om\wedge\pr^*\mu.
\end{equation}

From the properties of the hat pairing presented in proposition \ref{pppp}, lemma \ref{ins} and theorem \ref{dddd}, one can develop a bar calculus.
\begin{prop}\label{cor2}
For any $\om\in\Om^p(M)$, $\ph\in\Diff(M)$ and $X\in\X(M)$, the following identities hold: 
\begin{enumerate}
\item $\bar\ph^*\bar{\om}=\overline{\ph^*\om}$
\item $L_{\bar X}\bar\om=\overline{L_X\om}$
\item $i_{\bar X}\bar\om=\overline{i_X\om}$
\item $\dd\bar\om=\overline{\dd\om}$.
\end{enumerate}
\end{prop}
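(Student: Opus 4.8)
The plan is to observe that Proposition \ref{cor2} is nothing but the specialization of the hat pairing identities to the case $\al=\mu$, a fixed volume form on $S$, using the identification $\bar\om=\widehat{\om\cdot\mu}$ from \eqref{barb}. Since $\mu$ is a top-degree form on $S$, we have $q=k$, so the hat pairing lands in $\Om^p(\F(S,M))$, matching the degree of $\bar\om$. Each of the four identities then follows by substituting $\al=\mu$ into the corresponding result already established for the general hat pairing, together with the fact that $\mu$ is $\dd$-closed (being top-degree) so that the $\dd\al$ term in Theorem \ref{dddd} drops out. No new computation is needed; the work is purely a matter of citing the right earlier statement.

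Concretely, for identity 1 I would apply Proposition \ref{pppp}, equation \eqref{star}, with $\al=\mu$: $\bar\ph^*\widehat{\om\cdot\mu}=\widehat{(\ph^*\om)\cdot\mu}$, which is exactly $\bar\ph^*\bar\om=\overline{\ph^*\om}$. For identity 2 I would apply the infinitesimal version \eqref{el} of Proposition \ref{pppp}, giving $L_{\bar X}\widehat{\om\cdot\mu}=\widehat{(L_X\om)\cdot\mu}$, i.e. $L_{\bar X}\bar\om=\overline{L_X\om}$. For identity 3 I would apply Lemma \ref{ins} with $\al=\mu$: $i_{\bar X}\widehat{\om\cdot\mu}=\widehat{(i_X\om)\cdot\mu}$, that is $i_{\bar X}\bar\om=\overline{i_X\om}$. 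For identity 4 I would apply Theorem \ref{dddd} with $\al=\mu$; since $\dd\mu=0$ the second term on the right-hand side of \eqref{deri} vanishes, leaving $\dd\widehat{\om\cdot\mu}=\widehat{(\dd\om)\cdot\mu}$, i.e. $\dd\bar\om=\overline{\dd\om}$.

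There is essentially no obstacle here, since this is a direct transcription of the hat pairing calculus; the only point requiring a word of care is identity 4, where one must explicitly note that $\mu\in\Om^k(S)$ forces $\dd\mu=0$ so that the mixed term in Theorem \ref{dddd} disappears. One should also remark in passing that, in contrast with the hat map of Section \ref{s3}, here $S$ is assumed closed (boundaryless), so no boundary correction term of the type appearing in Proposition \ref{inde} or in \eqref{bond} enters identity 4.
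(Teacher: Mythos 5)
Your proposal is correct and matches the paper's (implicit) argument exactly: the paper states that the bar calculus follows from Proposition \ref{pppp}, Lemma \ref{ins} and Theorem \ref{dddd} by taking $\al=\mu$ in the hat pairing, which is precisely what you do, including the observation that $\dd\mu=0$ kills the extra term in \eqref{deri}. Nothing further is needed.
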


%%%%

\subsection*{$\F(S,M)$ as symplectic manifold}

Let $(M,\om)$ be a connected symplectic manifold and $S$ a compact $k$--dimensional manifold with a fixed volume form $\mu$, normalized such that $\int_S\mu=1$. 
The following fact is well known:

\begin{prop}
The bar map provides a symplectic form $\bar\om$ on $\F(S,M)$:
$$
\bar\om_f(X_f,Y_f)=\int_S\om(X_f,Y_f)\mu.
$$ 
\end{prop}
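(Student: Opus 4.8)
The claim is that $\bar\om$ is a symplectic form on $\F(S,M)$, so there are two things to establish: closedness and (weak) non-degeneracy. For closedness, I would simply invoke the bar calculus already proved in Proposition \ref{cor2}, item 4: since $\om$ is symplectic on $M$ it is in particular closed, $\dd\om=0$, and hence $\dd\bar\om=\overline{\dd\om}=\overline{0}=0$. So $\bar\om$ is a closed $2$-form on $\F(S,M)$. (One should also remark that $\bar\om$ is genuinely a $2$-form in the smooth Fr\'echet sense, i.e.\ a smooth section of the appropriate bundle, which follows from formula \eqref{barb} realizing it as a fiber integral of the smooth form $\ev^*\om\wedge\pr^*\mu$.)

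The substantive part is weak non-degeneracy. Fix $f\in\F(S,M)$ and suppose $X_f\in T_f\F(S,M)=\Ga(f^*TM)$ satisfies $\bar\om_f(X_f,Y_f)=\int_S\om(X_f,Y_f)\,\mu=0$ for every $Y_f\in\Ga(f^*TM)$. I want to conclude $X_f=0$. The idea is to localize: if $X_f\neq 0$, then $X_f(x_0)\neq 0$ for some point $x_0\in S$, and since $\om$ is (pointwise) non-degenerate on $M$, there is a tangent vector $v\in T_{f(x_0)}M$ with $\om_{f(x_0)}(X_f(x_0),v)>0$. Extend $v$ to a section $Y_f\in\Ga(f^*TM)$ (e.g.\ via a local frame and a bump function), chosen so that the function $x\mapsto \om_{f(x)}(X_f(x),Y_f(x))$ is $\geq 0$ on all of $S$ and strictly positive near $x_0$; this is possible by taking $Y_f$ supported in a small neighborhood of $x_0$ where $\om(X_f,Y_f)$ stays positive by continuity. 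Then $\int_S\om(X_f,Y_f)\,\mu>0$ since $\mu$ is a (positive) volume form, contradicting the hypothesis. Hence $X_f=0$, proving weak non-degeneracy.

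The main obstacle — really the only place requiring care — is the construction of the test section $Y_f$ and making sure the integrand does not change sign. One clean way: pick a chart $U\subset S$ around $x_0$ and a chart around $f(x_0)$ in $M$ small enough that $f(U)$ lies in it; in these coordinates choose a constant section $v$ with $\om_{f(x_0)}(X_f(x_0),v)>0$, shrink $U$ so that $\om_{f(x)}(X_f(x),v)>c>0$ for all $x\in U$ by continuity, and set $Y_f=\chi\cdot v$ where $\chi$ is a non-negative bump function supported in $U$ with $\chi(x_0)=1$. Then $\om(X_f,Y_f)=\chi\cdot\om(X_f,v)\geq 0$ everywhere and is $\geq c$ on a neighborhood of $x_0$, so its $\mu$-integral is strictly positive. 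I would also note explicitly, as in the codimension-two Grassmannian proof above, that non-degeneracy here is only \emph{weak} (the expected phenomenon in infinite dimensions), which is all one asks of a symplectic form on a Fr\'echet manifold. Finally, the normalization $\int_S\mu=1$ plays no role in closedness or non-degeneracy; it is a harmless convenience, so I would not dwell on it.
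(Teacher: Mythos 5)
Your proposal is correct and follows essentially the same route as the paper: closedness via the bar calculus identity $\dd\bar\om=\overline{\dd\om}=0$, and weak non-degeneracy by using pointwise non-degeneracy of $\om$ at a point where $X_f\neq 0$ together with a bump-function section $Y_f$ making the integrand non-negative and not identically zero. Your version merely spells out the construction of $Y_f$ in more detail than the paper does.
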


\begin{proof}
That $\bar\om$ is closed follows from the bar calculus: $\dd\bar\om=\overline{\dd\om}=0$.
The (weakly) non-degeneracy of $\bar\om$ can be verified as follows.
If the vector field $X_f$ on $M$ along $S$ is non-zero, then $X_f(x)\ne 0$ for some $x\in S$. Because $\om$ is non-degenerate, one can find another vector field $Y_f$ along $f$ such that $\om(X_f,Y_f)$ is a bump function on $S$. Then $\bar\om(X_f,Y_f)=\int_S\om(X_f,Y_f)\mu\ne 0$, so $X_f$ does not belong to the kernel of $\bar\om$,
thus showing that the kernel of $\bar\om$ is trivial.
\end{proof}

%%%%%

\paragraph{Hamiltonian action on $M$.}

Let $G$ be a Lie group acting in a hamiltonian way on $M$
with momentum map $J:M\to\g^*$.
Then $\F(S,M)$ inherits a $G$-action:
$(g\cdot f)(x)=g\cdot(f(x))$ for any $x\in S$.
The infinitesimal generator is $\xi_\F=\bar\xi_M$
for any $\xi\in\g$,
where $\xi_M$ denotes the infinitesimal generator for the $G$-action on $M$.
The bar calculus shows quickly that $G$ acts in a hamiltonian
way on $\F(S,M)$ with momentum map 
\[
\mathbf{J}=\bar J:\F(S,M)\to \g^*,\quad 
\bar J(f)=\int_S(J\o f)\mu,\quad\forall f\in\F(S,M).
\]
Indeed, for all $\xi\in\g$
\[
i_{\xi_\F}\bar\om=i_{\bar\xi_M}\bar\om=\overline{i_{\xi_M}\om}
=\overline{\dd\langle J,\xi\rangle}=\dd\langle\bar J,\xi\rangle.
\]

Let $M$ be connected and let $\si$ be the $\RR$-valued Lie algebra 2--cocycle on $\g$
measuring the non-equivariance of $J$, \ie
\[
\si(\xi,\et)={\langle J(x),[\xi,\et]\rangle}
-{\om(\xi_M,\et_M)}(x),\quad x\in M,
\]
(both terms are hamiltonian function for the vector field $[\xi,\et]_M=-[\xi_M,\et_M]$). Then the non-equivariance of $\mathbf{J}=\bar J$ is also measured by $\si$: for all $f\in\F(S,M)$
\[
\langle\bar J(f),[\xi,\et]\rangle-\bar\om(\xi_\F,\et_\F)(f)
=\overline{\langle J,[\xi,\et]\rangle}(f)
-\overline{\om(\xi_M,\et_M)}(f)
%=\overline{\si(\xi,\et)}(f)
=\si(\xi,\et).
\]

%%%%%

\paragraph{Hamiltonian $\Diff_{ham}(M,\om)$ action.} 
The action of the group $\Diff(M,\om)$ of symplectic diffeomorphisms preserves the symplectic form $\bar\om$:
\begin{equation*}
\bar\ph^*\bar\om=\overline{\ph^*\om}=\bar\om,\quad \forall\ph\in\Diff(M,\om).
\end{equation*}

The subgroup $\Diff_{ham}(M,\om)$ of hamiltonian diffeomorphisms of $M$ acts in a hamiltonian way on the symplectic manifold $\F(S,M)$.
The infinitesimal action of $X_h\in\X_{ham}(M,\om)$, $h\in\F(M)$,
is the hamiltonian vector field $\bar X_h$ on $\F(S,M)$ with hamiltonian function $\bar h$.
This follows by the bar calculus:
\begin{gather*}
\dd\bar h=\overline{\dd h}=\overline{i_{X_h}\om}=i_{\bar X_h}\bar\om.
\end{gather*}

The hamiltonian function $\bar h$ of $\bar X_h$ depends on the particular choice of the hamiltonian function $h$. To solve this problem we fix a point $x_0\in M$ and we choose the unique hamiltonian function $h$ with $h(x_0)=0$,
since $M$ is connected.
The corresponding momentum map is 
\begin{gather*}
\mathbf{J}:\mathcal{F}(S,M)\to \X_{ham}(M,\om)^*,\quad\langle\mathbf{J}(f),X_h\rangle=\bar h(f)=\int_S(h\o f)\mu.
\end{gather*}

The Lie algebra 2--cocycle on $\X_{ham}(M,\om)$ measuring
the non-equivariance of the momentum map is
\[
\si(X,Y)=-\om(X,Y)(x_0),
\]
by the bar calculus
\begin{align*}
\si(X,Y)(f)&=\langle\mathbf{J}(f),[X,Y]^{op}\rangle
-\bar\om(X_\F,Y_\F)(f)\\
&=\overline{\om(X,Y)-\om(X,Y)(x_0)}(f)-\bar\om(\bar X,\bar Y)(f)
=-\om(X,Y)(x_0).
\end{align*}
This is a Lie algebra cocycle describing the central 
extension 
$$0\to\RR\to\F(M)\to\X_{ham}(M,\om)\to 0$$ 
where $\F(M)$ is enowed with the canonical Poisson bracket. 
A group cocycle on $\Diff_{ham}(M,\om)$  integrating the Lie algebra cocycle $\si$ if $\om$ exact 
is studied in \cite{ILM06}.

%%%%

\paragraph{Hamiltonian $\Diff_{ex}(S,\mu)$ action.}
The (left) action of the group $\Diff(S,\mu)$ of volume preserving diffeomorphisms preserves the symplectic form $\bar\om$:
\begin{equation*}
\widehat\ps^*\bar\om=\widehat\ps^*\widehat{\om\cdot\mu}=\widehat{\om\cdot\ps^*\mu}=\widehat{\om\cdot\mu}=\bar\om,\quad\forall\ps\in\Diff(S,\mu).
\end{equation*}

The subgroup $\Diff_{ex}(S,\mu)$ of exact volume preserving diffeomorphisms acts in a hamiltonian way on the symplectic manifold $\F(S,M)$.
The infinitesimal action of the exact divergence free vector field $X_\al\in\X_{ex}(S,\mu)$ with potential form $\al\in\Om^{k-2}(S)$
is the hamiltonian vector field $\widehat X_\al$ on $\F(S,M)$ with hamiltonian function $\widehat{\om\cdot\al} $.
Indeed, from $i_{X_\al}\mu=\dd\al$ follows by the hat calculus that
\begin{gather*}
\dd(\widehat{\om\cdot\al})=\widehat{\dd\om\cdot\al}+\widehat{\om\cdot\dd\al}=\widehat{\om\cdot i_{X_\al}\mu}=i_{\widehat X_\al}\widehat{\om\cdot\mu}=i_{\widehat X_\al}\bar\om.
\end{gather*}

If the symplectic form $\om$ is exact, then the corresponding momentum map is
\begin{gather*}
\mathbf{J}:\mathcal{F}(S,M)\to \X_{ex}(S,\mu)^*,\quad
\langle\mathbf{J}(f),X_\al\rangle
=\widehat{(\om\cdot \al)}(f)
=\int_Sf^*\om\wedge \al.
\end{gather*}
It takes values in the regular part of $\X_{ex}(S,\mu)^*$,
which can be identified with $\dd\Om^1(S)$, so we can write $\mathbf{J}(f)=f^*\om$
under this identification.

In general the hamiltonian function $\widehat{\om\cdot\al}$ of $\widehat X_\al$ depends on the particular choice of the potential form $\al$ of $X_\al$.
To fix this problem we consider as in Section \ref{s3} a
continuous right inverse $b:\dd\Om^{m-2}(M)\to\Om^{m-2}(M)$
to the differential $\dd$, so $b(\dd\al)$ is a potential for $X_\al$.
The corresponding momentum map is
\begin{gather*}
\mathbf{J}:\mathcal{F}(S,M)\to \X_{ex}(S,\mu)^*,\quad
\langle\mathbf{J}(f),X_\al\rangle
=\widehat{(\om\cdot b\dd\al)}(f)
=\int_Sf^*\om\wedge b(\dd\al).
\end{gather*}

On a connected component $\F$ of $\F(S,M)$, 
the non-equivariance of $\mathbf{J}$ is measured 
by the Lie algebra 2--cocycle
\begin{align*}
\si_\F(X,Y)&=\langle\mathbf{J}(f),[X,Y]\rangle
-\bar\om(\hat X,\hat Y)(f)=({\om\cdot b\dd i_Yi_X\mu})\hat\ (f)
-({\om\cdot i_Yi_X\mu})\hat\ (f)\\
&=({\om\cdot Pi_Xi_Y\mu})\hat\ (f)
=\int_Sf^*\om\wedge Pi_Xi_Y\mu
\end{align*}
on the Lie algebra of exact divergence free vector fields,
for $P=1-b\dd$ the projection on the subspace
of closed $(m-2)$-forms.
It does not depend on $f\in\F$, because the 
cohomology class $[f^*\om]\in H^2(S)$ does not depend on
the choice of $f$. 
The cocycle $\si_\F$ is cohomologous to the Lichnerowicz cocycle
$\si_{f^*\om}$ defined in \eqref{lich} \cite{V09}.
Since $\int_S\mu=1$,
the cocycle $\si_\F$ is integrable if and only if the cohomology class of $f^*\om$ is integral \cite{I96}. 

\begin{rema}
The two equivariant momentum maps on the symplectic manifold $\F(S,M)$, 
for suitable central extensions of the hamiltonian group $\Diff_{ham}(M,\om)$ and 
of the group $\Diff_{ex}(S,\mu)$ of exact volume preserving diffeomorphisms,
form the dual pair for ideal incompressible fluid flow \cite{MW83} \cite{GBV09}.
\end{rema}

%%%%%%%%%%%%%%%%%%%%%%%%%%%%%

%%%%%%

\section{Appendix: Fiber integration}

Chapter VII in \cite{GHV72} is devoted to the concept of integration over the fiber in locally trivial bundles.
We particularize this fiber integration to the case of trivial bundles $S\x M\to M$, listing its main properties without proofs.

Let $S$ be a compact $k$--dimensional manifold. Fiber integration over $S$ 
assigns to $\om\in\Om^n(S\x M)$ the differential form $\fint_S\om\in\Om^{n-k}(M)$ defined by
\[
(\fint_S\om)(x)=\int_S\om_x\in\La^{n-k}T^*_xM,\quad\forall x\in M,
\]
where $\om_x\in\Om^k(S,\La^{n-k}T_x^*M)$ is the retrenchment of $\om$ to the fiber over $x$:
\[
\langle\om_x(Z_s^1,\dots,Z_s^{n-k}),X_x^1\wedge\dots\wedge X_x^k\rangle=\om_{(s,x)}(X_x^1,\dots,X_x^k,Z_s^1,\dots, Z_s^{n-k})
\]
for all $X_x^i\in T_xM$ and $Z_s^j\in T_sS$.

The properties of the fiber integration used in the text are
special cases of the propositions (VIII) and (X) in \cite{GHV72}:
\begin{enumerate}
\item Pull-back of fiber integrals:
\begin{equation}\label{r1}
f^*\fint_S\om=\fint_S(1_S\x f)^*\om,\quad\forall f\in\F(M',M),
\end{equation}
with infinitesimal version
\begin{equation}\label{rr}
L_X\fint_S\om=\fint_SL_{0_S\x X}\om,\quad\forall X\in\X(M).
\end{equation}
\item Invariance under pull-back by orientation preserving diffeomorphisms of $S$:
\begin{equation}\label{r2}
\fint_S(\ph\x 1_M)^*\om=\fint_S\om,\quad\forall\ph\in\Diff_+(S),
\end{equation}
with infinitesimal version 
$\fint_SL_{Z\x 0_M}\om=0,\quad\forall Z\in\X(S)$.
\item Insertion of vector fields into fiber integrals:
\begin{equation}\label{r3}
i_X\fint_S\om=\fint_Si_{0_S\x X}\om,\quad\forall X\in\X(M).
\end{equation}
%\[\int_Si_{Z\x 0_M}\om=0,\quad\forall Z\in\X(S).\]
\item Integration along boundary free manifolds commutes with differentiation. When $\pa S$ denotes the boundary of the $k$--dimensional compact manifold $S$ and $i_\pa:\pa S\to S$ the inclusion,
\begin{equation}\label{r4}
\dd\fint_S\be-\fint_S\dd\be=(-1)^{n-k}\fint_{\pa S}(i_\pa\x 1_M)^*\be
\end{equation}
holds for any differential $n$--form $\be$ on $S\x M$.
\end{enumerate}

%%%%%%%%%%%%%%%


\begin{thebibliography}{GBMW83}

\bibitem[AS05]{AS05}
A. Alekseev and T. Strobl,
{\it Current algebras and differential geometry},
arXiv:hep-th/0410183v2, 2005.

\bibitem[BZ05]{BZ05}
G. Bonelli and M. Zabzine,
{\it From current algebras for $p$-branes to topological $M$-theory},
arXiv:hep-th/0507051v2, 2005.

\bibitem[B93]{B93}
J.-L. Brylinski,
{\it Loop spaces, characteristic classes and geometric quantization}, Progress in Math. 107,
Birkh\"auser, 1993.

%\bibitem[GBTV09]{GBTV09}
%F. Gay-Balmaz, C. Tronci and C. Vizman, {\it Geodesic flows on the automorphism group of principal bundles}
%{\it Dual pairs in fluid dynamics}, Preprint, 2009.

\bibitem[GBV09]{GBV09}
F. Gay-Balmaz and C. Vizman, 
{\it Dual pairs in fluid dynamics},
preprint 2009.

\bibitem[GHV72]{GHV72}
W. Greub, S. Halperin and R. Vanstone,
{\it Connections, curvature and cohomology},
Vol. I, Pure and Applied Mathematics 47,
Academic Press, New York-London, 1972

\bibitem[HV04]{HV04}
S. Haller and C. Vizman,
{\it Non-linear Grassmannians as coadjoint orbits},
Math. Ann. {329} (2004), 771--785.

\bibitem[H76]{H76}
M.W. Hirsch,
{\it Differential topology},
Graduate Texts in Math. {33}, Springer, 1976.

\bibitem[I96]{I96}
R.S. Ismagilov,
{\it Representations of infinite-dimen\-sional groups},
Translations of Mathematical Monographs {152},
American Mathematical Society, Providence, RI, 1996.

\bibitem[ILM06]{ILM06}
R.S. Ismagilov, M. Losik, and P.W. Michor,
{\it A 2-cocycle on a group of symplectomorphisms},
Moscow Math. J. {6} (2006), 307--315.

\bibitem[KM97]{KM97}
A. Kriegl and P.W. Michor,
{\it The convenient setting of global analysis},
Mathematical Surveys and Monographs {53},
American Mathematical Society, Providence, RI, 1997.

\bibitem[MW83]{MW83}
J.E. Marsden and A. Weinstein,
{\it  Coadjoint orbits, vortices, and Clebsch variables for
incompressible fluids.} Phys. D 7 (1983),  305--323.

\bibitem[MR99]{MR99}
J.E. Marsden and T. Ratiu,
{\it Introduction to Mechanics and Symmetry}, 2nd edition, 
Springer, 1999.

\bibitem[M80]{M80}
P.W. Michor, 
{\it Manifolds of differentiable mappings}
Shiva Mathematics Series 3, Shiva Publ., Orpington, 1980.

\bibitem[R95]{R95}
Roger, C., 
{\it Extensions centrales d'alg\`ebres et de groupes de Lie de dimension infinie,
alg\`ebre de Virasoro et g\'en\'eralisations},
Rep. Math. Phys., {35} (1995) 225-266.

\bibitem[V09]{V09}
C. Vizman,
{\it Lichnerowicz cocycles and central Lie group extensions},
Preprint, 2009.


\end{thebibliography}
\end{document}